\newtheorem{theorem}{Theorem}[section]
\newtheorem{lemma}[theorem]{Lemma}
\newtheorem{proposition}[theorem]{Proposition}
\newtheorem{corollary}[theorem]{Corollary}
\theoremstyle{definition}
\newtheorem{df}{Definition}
\newtheorem{example}[df]{Example}
\newtheorem{remark}[df]{Remark}
\newcommand{\N}{\mathbb N}
\newcommand{\R}{\mathbb R}
\newcommand{\g}[1]{\boldsymbol{#1}}
\title[On functions preserving products of semimetric spaces]{On functions preserving products of certain classes of semimetric spaces}
\date{December 2020}
\author{Mateusz Lichman}
\address{Institute of Mathematics, Lodz University of Technology}
\email{mateusz.lichman@wp.pl}
\author{Piotr Nowakowski}
\address{Institute of Mathematics, Lodz University of Technology}
\email{piotr.nowakowski@dokt.p.lodz.pl}
\author{Filip Turobo\'s}
\address{Institute of Mathematics, Lodz University of Technology}
\email{filip.turobos@gmail.com}
\begin{document}

\begin{abstract}
In the paper we continue the research of Bors\'{i}k and Dobo\v{s} on functions which allow us to introduce a metric to the product of metric spaces. In this paper we extend their scope on broader class of spaces which usually fail to satisfy the triangle inequality, albeit they tend to satisfy some weaker form of this axiom. In particular, we examine the behavior of functions preserving $b$-metric inequality. We provide analogues of the results of Bors\'{i}k and Dobo\v{s}, adjusted to the new, broader setting. The results we obtained are illustrated with multitude of examples. Furthermore, the connections of newly introduced families of functions with the ones already known from the literature are investigated. 
\end{abstract}

\maketitle

\section{Introduction}

In the case of various structures of either algebraic type it is relatively easy to merge a finite number of such spaces into a single product space equipped with structure based on the initial ones. When metric structures are combined, usually functions like minimum, sum or quadratic mean are employed for this task. The investigation of functions which coalesce multiple metric spaces into a single one dates back to the beginning of 1980s', when Bors\'{i}k and Dobo\v{s} \cite{Borsik1981,Borsik1982} laid formal foundations to the concept of the product-wise metric preserving functions. The research on this concept was conducted in two different directions. Some researchers either sought more properties of such functions (e.g. \cite{Pokorny1996,Pokorny1998,Terpe1988}). Others considered a variation of this notion, motivated by some applications, where a family of metrics were defined on a same set and it was necessary to meld them into a single one. This approach resulted in the definition of aggregation functions -- compare with the neat applicational papers of Valero et al., \cite{Martin2011,Massanet2012,Mayor2010,Mayor2019} and see also the references therein.

The simultaneous development of the property preserving functions of one variable is also worth mentioning. The idea can be traced back to Wilson \cite{Wilson1935} and Sreenivasan \cite{Sreenivasan1947}. The research on this subject was later conducted by Bors\'{i}k, Dobo\v{s} and Piotrowski (see \cite{Borsik1981compo,Borsik1981,Borsik1982,Borsik1988,Dobos1994,Dobos1996,Dobos1996Piotro,Dobos1997,Dobos1998}), Corazza (see \cite{Corazza1999}), Das (see \cite{Das1989}), Dovgoshey (see \cite{Dovgoshey2019,Dovgoshey2020,Dovgoshey2009,Vallin2020}), J\r{u}za (see \cite{Juza1956}), Khemaratchatakumthorn, Pongsriiam, Termwuttipong and Samphavat (see \cite{Khemaratchakumthorn2018,Khemaratchakumthorn2019,Khemaratchakumthorn2012,Pongsriiam2014FPT,Pongsriiam2014,Samphavat2020,Termwuttipong2005}), Pokorny (see \cite{Pokorny1993,Pokorny1996,Pokorny1998}), Vallin (see \cite{Vallin1998,Vallin2000,Vallin2020}) and recently also by Jachymski and Turobo\'s (see \cite{Jachymski2020,arxiv}).

The concept of $b$-metric spaces can be traced back to Frink (see \cite{Frink1937}) as well as Bakhtin (see \cite{Bakhtin1989}), but the name is usually connected with Czerwik \cite{Czerwik1993}, who conducted the research on particular subclass of this spaces where the relaxing constant was 2. Recently this generalization of the notion of metric space is reliving its second youth, as the interest in this direction of metric-related research has been rekindled in recent 30 years. This renaissance refers both to the advances in the field of metric fixed point theory as well as properties of these spaces themselves (see \cite{Bessenyei2017,Jachymski1995,Karapinar2018,VanAn2014}). In the context of preserving triangle-like conditions, a great deal of work has been done by aforementioned Thai mathematicians as well as Dovgoshey.


\section{Preliminaries}
We begin with introducing several important notions as well as some examples depicting those. Throughout the paper, by $\R_+$ we will denote the set of non-negative reals, i.e., $\R_+ = [0,\infty).$ Let us start with the notion of semimetric.

\begin{df}
Let $X$ be a non-empty set and $d\colon X^2\to \R_+$. We say that $d$ is a \textit{semimetric} on $X$ if the following conditions are satisfied:
\begin{itemize}
    \item[(S1)] $\forall_{x,y\in X} \, d(x,y)=0 \iff x=y$;
    \item[(S2)] $\forall_{x,y\in X} \, d(x,y)=d(y,x)$.
\end{itemize}
\end{df}

Since the paper revolves around various generalizations of the triangle inequality, we start with the following two general definitions inspired by the definition of triangle function introduced by P\'ales and Bessenyei \cite{Bessenyei2017}:

\begin{df}
A function $f \colon \R_+^2 \to \R_+$ is called \textit{nonreducing} if, for any $a,b \in \R_+^2$, $f(a,b) \geqslant \max\{a,b\}$.
\end{df}
\begin{df}
Let $g \colon \R_+^2 \to \R_+$ be a nonreducing function. 
Let $(X,d)$ be a semimetric space satisfying the following condition in place of the triangle condition:
\begin{itemize}
\item[(G)] $\forall_{x,y,z \in X_i} \, d(x,z) \leqslant g(d(x,y),d(y,z)).$
\end{itemize}
Then we say that $(X,d)$ is a \textit{$G$-metric space} (and, accordingly, that $d$ is a \textit{$G$-metric}), where condition $(G)$ is called a \textit{semi-triangle condition generated by the function $g$}.
\end{df}

\begin{example}\label{example:3} We now give examples of some popular conditions with function $g \colon \R_+^2 \to \R_+$ generating them (here $K\geqslant 1$ is some fixed, arbitrary constant):
\begin{enumerate} 
\item[(M)] $\forall_{x,y,z \in X_i} \,\, d(x,z) \leqslant d(x,y)+d(y,z),\,\,g(a,b) = a + b,$
\item[(U)] $\forall_{x,y,z \in X_i}\,\, d(x,z) \leqslant \max\{d(x,y),d(y,z)\},\,\, g(a,b) = \max\{a, b\},$
\item[($B_K$)] $\forall_{x,y,z \in X_i}\,\, d(x,z) \leqslant K(d(x,y)+d(y,z)),\,\, g(a,b) = K(a+ b).$
\item[($S_K$)] $\forall_{x,y,z \in X_i} \,\,d(x,z) \leqslant Kd(x,y)+d(y,z),\,\, g(a,b) = Ka+ b,$
\item[(T)] $\forall_{x,y,z \in X_i}\,\, d(x,z) \leqslant \psi(d(x,y)+d(y,z)), \,\,g(a,b) = \psi(a+ b),$ \text{ where } $\psi \colon \R_+ \to \R_+ \text{ is nondecresing,}$ continuous at the origin and  $\psi(0)=0.$
\end{enumerate}
While the first two examples depict well-known notions of \textbf{metric} and \textbf{ultrametric} respectively, third and fourth refer to the concepts known as $b$\textbf{-metric} and \textbf{strong} $b$\textbf{-metric spaces}. We refer the Reader to the appropriate chapters of \cite{KirkShahzad} for more examples and application of these concepts and we underline the fact that $K$ is fixed in this situations, which will be important later on. Last point refers to the notion of \textbf{triangle function} introduced in \cite{Bessenyei2017}.
\end{example}


Let $n \in \N$. For $i \in \{1,\dots,n\}$, let $g_i\colon \R_+^2 \to \R_+$ be a nonreducing function and $G_i$ be the semi-triangle condition generated by the function $g_i$. Let $(X_i,d_i)$ be $G_i$-metric space for $i=1,\dots, n$. Moreover, let $h\colon \R_+^2 \to \R_+$ be a nonreducing function and $H$, analogously, be the condition generated by the function $h$.

By $\g{X} = \prod_{i=1}^n X_i$ we shall denote the Carthesian product of spaces $X_i$. We want to describe all functions $F\colon \R^n_+ \to \R_+$ for which the function $D\colon \g{X}^2 \to \R_+$ given by a formula
\begin{equation}\label{functio}
D(\g{x},\g{y}) = F(d_1(x_1,y_1),\dots,d_n(x_n,y_n)),
\end{equation}
    for any\footnote{In general, throughout the paper the elements of the Carthesian product will be denoted by boldface font.} $\g{x}=(x_1,\dots,x_n)$, $\g{y}=(y_1,\dots,y_n) \in \g{X}$ is an $H$-metric on $\g{X}$. Thus, we arrive at the following definition.

\begin{df}
We say that $F$ is $(G_1,\dots,G_n)-H$\textit{-preserving} if for any collection of $G_i$-metric spaces $(X_i,d_i)$, $i=1,\dots,n$, the function $D$ given by \eqref{functio} is an $H$-metric on $\g{X}$. The family of all $(G_1,\dots,G_n)-H$-preserving functions will be denoted by $P_{(G_1,\dots,G_n)-H}$.
\end{df}

\begin{remark}
This definition is coherent with the notation used in \cite{Jachymski2020} and references therein. For example, the class of all functions which combine a finite family of $b$-metrics (regardless of the relaxation constants on each space) into a single $b$-metric on a product space will  be called ($n$)-$b$-metric preserving functions and the family of all such functions will be denoted by $P^n_{B}$. When different axioms are considered, for example functions combining multiple $b$-metrics into a metric, a name ($n$)-$b$-metric-metric preserving functions will be used. Again, the class of functions having this property will be denoted by $P^n_{BM}$.  Analogously, we define families $P^n_{SB}$, $P^n_{SM}$ etc.

Of course, when we want to specify the relaxation constants $K$ in starting or resulting spaces, we will refer to the notation $(B_K), (S_K)$ and so forth.
\end{remark}


Let us begin with the well-known example in order to acquaint the Reader with the motivation for this paper. 

\begin{example}
Consider a standard Euclidean metric $d_e$ on $\R^n$. It is, in fact, result of combining (according to formula \eqref{functio}) a function $F_1:R_+^n\to \R$ given by
\[
F_1(x_1,\dots,x_n):= \sqrt{ \sum_{i=1}^n x_i^2 }
\]
with $n$ copies of standard metric $(x,y)\mapsto |x-y|$ defined on $\R$.

Replacing $F_1$ with $F_2:\R_+^n\to \R_+$ defined as
\[
F_2(x_1,\dots,x_n):= \sum_{i=1}^n x_i
\]
yields so called taxicab metric.\footnote{Also known as Manhattan metric or simply $L_1$ metric.}  In turn using function $F_3$ defined as maximum of its arguments yields metric known as Chebyshev distance or $L_\infty$ metric. 
\end{example}

The following lemma is an easy consequence of the definition of families $P_{(G_1,\dots,G_n)-H}$.
\begin{lemma} \label{zal}
Let $n \in \N$. For $i \in \{1,\dots,n\}$, let $g_i,h_i,g,h\colon \R_+^2 \to \R_+$ be nonreducing functions and $G_i$, $H_i,G,H$ be the semi-triangle conditions generated, respectively, by the functions $g_i, h_i,g,h$. Assume that $F\colon \R^n_+ \to \R_+$ is $(G_1,\dots,G_n)-G$-preserving. Then:
\begin{itemize}
    \item[(i)] if for all $i \in \{1,\dots,n\} $ and every $a,b \in \R_+$ we have $g_i(a,b) \geqslant h_i(a,b)$ (equivalently $H_i \Rightarrow G_i$), then $F$ is $(H_1,\dots,H_n)-G$-preserving;
    \item[(ii)] if for all $a,b \in \R_+$ we have $g(a,b) \leqslant h(a,b)$ (equivalently $G \Rightarrow H$), then $F$ is $(G_1,\dots,G_n)-H$-preserving.
\end{itemize}
\end{lemma}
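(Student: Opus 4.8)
The plan is to reduce both parts to the defining property of $F$ being $(G_1,\dots,G_n)$-$G$-preserving by exploiting a single monotonicity principle: replacing a generating function by a pointwise larger one can only weaken the associated semi-triangle condition. Concretely, if $g'(a,b) \geqslant g''(a,b)$ for all $a,b \in \R_+$ and a semimetric space $(X,d)$ satisfies the condition generated by $g''$, then the chain $d(x,z) \leqslant g''(d(x,y),d(y,z)) \leqslant g'(d(x,y),d(y,z))$ shows that $(X,d)$ also satisfies the condition generated by $g'$. This is precisely the content of the parenthetical equivalences $H_i \Rightarrow G_i$ and $G \Rightarrow H$ in the statement, and I would record it as the key observation at the outset, before treating the two parts separately.

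For part (i), I would take an arbitrary collection of $H_i$-metric spaces $(X_i,d_i)$, $i=1,\dots,n$. Each such space is in particular a semimetric space, and since $g_i(a,b) \geqslant h_i(a,b)$ for all $a,b$, the monotonicity observation shows that each $(X_i,d_i)$ also satisfies $G_i$, hence is a $G_i$-metric space. Applying the hypothesis that $F$ is $(G_1,\dots,G_n)$-$G$-preserving to this very collection yields that the function $D$ defined by \eqref{functio} is a $G$-metric on $\g{X}$. Since the collection of $H_i$-metric spaces was arbitrary, this is exactly the assertion that $F$ is $(H_1,\dots,H_n)$-$G$-preserving.

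For part (ii), I would instead start from an arbitrary collection of $G_i$-metric spaces $(X_i,d_i)$. The hypothesis on $F$ gives immediately that $D$ is a $G$-metric, i.e.\ that $D$ is a semimetric on $\g{X}$ satisfying the condition generated by $g$. Because $g(a,b) \leqslant h(a,b)$ for all $a,b$, the same monotonicity observation, now applied to $D$ itself, shows that $D$ additionally satisfies the condition generated by $h$, so $D$ is an $H$-metric. As the starting spaces were arbitrary $G_i$-metric spaces, this proves that $F$ is $(G_1,\dots,G_n)$-$H$-preserving.

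I do not expect a genuine obstacle here; the entire argument is bookkeeping in the directions of the inequalities. The only point demanding care is on which side the monotonicity observation is applied in each part. In (i) it is invoked on the \emph{input} spaces, to pass from each $H_i$ condition up to the corresponding $G_i$ condition, which is why the inequality is oriented as $g_i \geqslant h_i$; in (ii) it is invoked on the \emph{output} semimetric $D$, to pass from the $G$ condition up to the $H$ condition, which is why the inequality reverses to $g \leqslant h$. Beyond this, no work is needed on the remaining semimetric axioms (S1) and (S2): $D$ is literally the same function throughout both parts, so only the triangle-type condition is ever being traded.
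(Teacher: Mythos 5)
Your argument is correct and is exactly the intended one: the paper states this lemma without proof, calling it an easy consequence of the definitions, and your unwinding --- in (i) every $H_i$-metric space is a fortiori a $G_i$-metric space, in (ii) the resulting $G$-metric $D$ is a fortiori an $H$-metric --- is precisely that easy consequence. Nothing is missing.
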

\begin{remark}\label{uwaga}
The family of all ($n$)-$b$-metric-metric preserving functions can be written as the following intersection.
\[
P_{BM}^n = \bigcap_{K_1,\dots,K_n \geqslant 1} P_{(B_{K_1},B_{K_2},\dots,B_{K_n})-M}.
\]
However, if two \textit{relaxed} conditions are concerned, this class becomes an intersection of unions of particular families of functions (this fact is an immediate consequence of Lemma \ref{mainlemma}, which will be proven in the latter part of the paper). For example, when the class $P_{B}^n$ of all ($n$)-$b$-metric preserving functions is considered, the following equality holds 
\[
P_{B}^n = \bigcap_{K_1,\dots,K_i \geqslant 1} \bigcup_{K\geqslant 1} P_{(B_{K_1},B_{K_2},\dots,B_{K_n})-B_{K}}.
\]

Of course, analogous statements can be given for families connected with inequalities (S) and so on.
\end{remark}

As one can see, the functions which are of the main concern in this paper have multiple arguments. This require us to specify several notions, which are less obvious than their one-dimensional equivalents. 

\begin{df}
Let $n \in \N$. We define a relation $\preceq$ on $\R_+^n$ by: $\g{a} \preceq \g{b}$ iff $a_i\leqslant b_i$ for all $i\in \{1,\dots , n\}$ where $\g{a}=(a_1, \dots, a_n)$ and $\g{b}=(b_1, \dots, b_n)$. We will also write
$\g{a+b} := (a_1+b_1,\dots,a_n+b_n)$ and $\g{0} :=(0,\dots,0)$.
\end{df}

Obviously, $\preceq$ is a partial order. The same partial order was considered in the multiple papers of Valero et al. \cite{Martin2011,Massanet2012} as well as in the original works of Bors\'ik and Dobo\v{s}. In the light of the foregoing, we introduce the definitions of the following properties:

\begin{df}
Let $n \in \N$. A function $F\colon\R_+^n\to\R_+$ is said to be \begin{itemize}
    \item \textit{monotone}, if $F(\g{a})\leqslant F(\g{b})$ whenever $\g{a} \preceq \g{b}$;
    \item \textit{subadditive}, if $F(\g{a+b})\leqslant F(\g{a})+F(\g{b})$ for each $\g{a}, \g{b} \in \R_+^n$;
    \item \textit{quasi-subadditive}, if there exists $s\geqslant 1$ such that $F(\g{a+b})\leqslant s(F(\g{a})+F(\g{b}))$ for each $\g{a}, \g{b} \in \R_+^n$.
\end{itemize}
\end{df}

Another crucial property of one-dimensional metric-preserving functions is known under the name \textit{amenability}. A function $f\colon \R_+\to\R_+$ is called \textit{amenable} if $f^{-1}\left(\{0\}\right) = \{0\}$. It seems reasonable to define its multidimensional equivalent as follows:

\begin{df}
A function $F\colon \R_+^n\to \R_+$ is said to be \textit{amenable} if the following equivalence holds: \[
F(\g{x}) = 0 \iff 
\g{x}=(0,0,\dots,0).\]
\end{df}

\section{Main results}

We'd like to start with a characterization of the $(G_1,\dots,G_n)-H$ preserving functions which is based on the concept of triangle triplet. The notion of the triangle triplet can be traced back to the paper of Sreenivasan \cite{Sreenivasan1947} and was reintroduced in \cite{Borsik1981compo}. It is worth pointing out, that the usage of this concept for more general spaces appears in \cite{Khemaratchakumthorn2018,Pongsriiam2014} as well as in \cite{arxiv}. A successful attempt of extending the original definition into multidimensional version have also been undertaken, see \cite{Borsik1981}. 

We will now unify all the concepts discussed above via the following definition:

\begin{df}\label{onedimtt}
Let $G$ be the semi-triangle condition generated by the nonreducing function $g\colon \R_+^2 \to \R_+$. A triplet $(a,b,c) \in [0,+\infty)^3$ is called a $G$\textit{-triangle triplet} if  $a \leqslant g(b,c), b \leqslant g(c,a), c \leqslant g(a,b)$ and $a,b,c > 0$ or 
 $(a,b,c)=(0,l,l)$ for some non-negative constant $l$, up to a permutation. 
\end{df}
\begin{remark}
This specification in the definition above enables us to avoid the pitfall of triplets consisting of $0$ and two distinct positive numbers. Such triplet of distances could satisfy the desired inequalities from the Definition \ref{onedimtt} despite being unobtainable in any semimetric space due to the symmetry axiom. These triplets could cause additional issues in formulation of some Theorems as well as in the proofs, therefore we have decided to exclude them from the Definition above.
\end{remark}
We will use the symbol $\triangle_G$ to denote the set of all $G$-triangle triplets (similar to the notions used in \cite{Samphavat2020} as well as in their previous papers).

\begin{df}\label{multidimtt}
Let $n\in \N$. For all $i \in \{1,\dots,n\}$ let $G_i$ be the semi-triangle condition generated by the nonreducing function $g_i\colon \R_+^2 \to \R_+$. A triplet $(\g{a},\g{b},\g{c}) \in [0,+\infty)^{3n}$, where $\g{a},\g{b},\g{c} \in \R_+^n$ is called a $(G_1,\dots,G_n)$\textit{-triangle triplet} if for each $i=1,\dots,n$, the $i$-th coordinates of $\g{a},\g{b}$ and $\g{c}$ form a $G_i$-triangle triplet (according to the Definition \ref{onedimtt}). 
\end{df}

Analogously to the one-dimensional case, we will denote the set of all $(G_1,\dots,G_n)$-triangle triplets by $\triangle_{(G_1,\dots,G_n)}$. 

We can now move on to extending the results obtained in \cite{Borsik1981,Corazza1999,Dobos1998,Samphavat2020,arxiv} to a multidimensional scope.

\begin{theorem}\label{theorem:main}
Let $n \in \N$. For $i \in \{1,\dots,n\}$, let $g_i\colon \R_+^2 \to \R_+$ be a nonreducing function,  $G_i$ be the semi-triangle condition generated by the function $g_i$. Moreover, let $h\colon \R_+^2 \to \R_+$ be a nonreducing function and $H$ be the semi-triangle condition generated by the function $h$.
A function $F\colon \R^n_+ \to \R_+$ is $(G_1,\dots,G_n)-H$-preserving if and only if it is amenable and satisfies the following condition
\begin{equation} \label{w}
\forall_{(\g{a},\g{b},\g{c})  \in \triangle_{(G_1,\dots,G_n)}} \ \    (F(\g{a}),F(\g{b}),F(\g{c}))\in \triangle_{H}.
\end{equation}
\end{theorem}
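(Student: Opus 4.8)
The plan is to verify the two axioms of an $H$-metric that are not automatic, namely the separation axiom (S1) and the semi-triangle condition $(H)$, since symmetry (S2) of $D$ follows immediately from the symmetry of each $d_i$. The role of amenability is precisely to encode (S1), while condition \eqref{w} encodes $(H)$; the proof splits into the two implications accordingly.

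For sufficiency, assume $F$ is amenable and satisfies \eqref{w}, fix arbitrary $G_i$-metric spaces $(X_i,d_i)$, and consider $D$ defined by \eqref{functio}. Property (S1) follows at once: $D(\g{x},\g{y})=0$ forces, by amenability, $d_i(x_i,y_i)=0$ for every $i$, hence $x_i=y_i$ by (S1) for $d_i$. For $(H)$, I would fix $\g{x},\g{y},\g{z}\in\g{X}$ and observe that, coordinate by coordinate, the triplet $(d_i(x_i,z_i),d_i(x_i,y_i),d_i(y_i,z_i))$ is a $G_i$-triangle triplet: the three cyclic instances of $(G)$ applied to the orderings $(x_i,y_i,z_i)$, $(y_i,z_i,x_i)$, $(z_i,x_i,y_i)$, together with symmetry, give exactly $a\leqslant g_i(b,c)$, $b\leqslant g_i(c,a)$, $c\leqslant g_i(a,b)$, while any coincidence of points collapses the triplet to the admissible degenerate form $(0,l,l)$. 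Thus $(\g{a},\g{b},\g{c})\in\triangle_{(G_1,\dots,G_n)}$, so \eqref{w} yields $(D(\g{x},\g{z}),D(\g{x},\g{y}),D(\g{y},\g{z}))\in\triangle_H$; reading off the first inequality, and invoking that $h$ is nonreducing in the degenerate case, gives precisely $(H)$.

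For necessity, assume $F\in P_{(G_1,\dots,G_n)-H}$. Amenability comes from two test families: choosing $\g{x}=\g{y}$ in any product gives $F(\g{0})=D(\g{x},\g{x})=0$, and conversely, if $F(\g{x})=0$ for some $\g{x}\neq\g{0}$, I would take in each coordinate a two-point space realizing distance $x_i$ (a one-point space when $x_i=0$); the resulting product points are distinct yet lie at $D$-distance $F(\g{x})=0$, contradicting (S1). To establish \eqref{w}, fix $(\g{a},\g{b},\g{c})\in\triangle_{(G_1,\dots,G_n)}$ and, for each $i$, construct a $G_i$-metric space on three points $x_i,y_i,z_i$ whose pairwise distances are $a_i,b_i,c_i$ (collapsing to a two- or one-point space when that coordinate is degenerate). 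Taking the product and applying the now-available condition $(H)$ for $D$ along the three cyclic orderings of $\g{x},\g{y},\g{z}$ produces the three cyclic $H$-inequalities for $(F(\g{a}),F(\g{b}),F(\g{c}))$, with the degenerate configurations matched directly; hence this image triplet lies in $\triangle_H$.

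The main obstacle is the witness construction in the necessity direction: one must check that the three-point space carrying the prescribed distances is genuinely a $G_i$-metric space, i.e.\ that $(G)$ holds for \emph{every} ordering of its points, and one must dispose carefully of all degenerate configurations (coordinates equal to $0$) so that both (S1) and symmetry are respected. This is exactly the point where the definition of a $G_i$-triangle triplet must be matched to the realizable triplets, and it is where the bulk of the technical care — rather than in the comparatively formal sufficiency argument — will be required.
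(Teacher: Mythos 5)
Your proposal is correct and follows essentially the same route as the paper's proof: sufficiency by checking (S1), (S2) and $(H)$ directly from amenability and \eqref{w}, and necessity via two-point witness spaces for amenability and three-point witness spaces realizing a prescribed $(G_1,\dots,G_n)$-triangle triplet (the paper phrases the necessity step by contradiction, but the constructions are identical). The technical point you flag — that the three-point spaces are genuine $G_i$-metric spaces because $g_i$ is nonreducing and the coordinates form $G_i$-triangle triplets, with degenerate coordinates handled by collapsing points — is exactly how the paper disposes of it.
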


\begin{proof}
"$\Leftarrow$ "
For $i\in \{1,\dots, n\}$, let $(X_i,d_i)$ be a $G_i$-metric space. Without loss of generality, we may assume that these are non-trivial, i.e. consists of at least two points.
Let $\g{X} = \prod_{i=1}^n X_i$.
Let $\g{x}=(x_1,\dots,x_n),\g{y} =(y_1,\dots,y_n), \g{z} =(z_1,\dots,z_n)\in \g{X}$. Since $F$ is amenable, we have 
\begin{eqnarray*}
D(\g{x},\g{y}) =0 &\Leftrightarrow& F(d_1(x_1,y_1),\dots,d_n(x_n,y_n))=0\\
&\Leftrightarrow& \forall_{i\in\{1,\dots,n\}}\  d_i(x_i,y_i)=0\\ &\Leftrightarrow& \g{x}=\g{y}.
\end{eqnarray*}
By the definition of $D$ and the fact that $d_i$ are $G_i$-metrics, we have that $D$ is symmetric.

Since $d_i$ are $G_i$-metrics, we have that, for any $i \in \{1,\dots,n\}$, 
$$d_i(x_i,z_i) \leqslant g_i(d_i(x_i,y_i),d_i(y_i,z_i)),$$
$$d_i(x_i,y_i) \leqslant g_i (d_i(y_i,z_i),d_i(x_i,z_i)),$$
$$d_i(y_i,z_i) \leqslant g_i (d_i(x_i,z_i),d_i(x_i,y_i)),$$
so $(d_i(x_i,z_i),d_i(x_i,y_i),d_i(y_i,z_i))$ \text{forms a }$G_i$\text{-triangle triplet}.
Hence, by (\ref{w}), we have that
\begin{eqnarray*}
D(\g{x},\g{z}) &=& F(d_1(x_1,z_1),\dots,d_n(x_n,z_n))\\
&\leqslant &  h(F(d_1(x_1,y_1),\dots,d_n(x_n,y_n)), F(d_1(y_1,z_1),\dots,d_n(y_n,z_n)))\\
&=& h(D(\g{x},\g{y}),D(\g{y},\g{z})).
\end{eqnarray*}
Therefore, $F$ is $(G_1,\dots,G_n)-H$-metric preserving.

"$\Rightarrow $"

Let $F$ be $(G_1,\dots,G_n)-H$-metric preserving. On the contrary, assume that $F$ is not amenable. If $F(\g{0}) \neq 0$, then a contradiction is obvious. 

Thus, there is $\g{a}=(a_1,\dots,a_n) \in \R^n_+ \setminus \{\g{0}\}$ such that $F(\g{a}) = 0$. For $i \in \{1,\dots,n\}$ let $(X_i,d_i) = (\{0,a_i\},d_e)$, where $d_e$ is the Euclidean metric. Then, for any $i \in \{1,\dots, n\}$, $(X_i,d_i)$ is, obviously, a $G_i$-metric space. Indeed, the first two axioms are trivial and the third follows from the fact that function $g_i$ is nonreducing for each $i$. 

Let $x_i=0, y_i = a_i$, for $i \in \{1,\dots, n\}$, $\g{x}=(x_1,\dots,x_n),\g{y}=(y_1,\dots,y_n).$ Then $\g{x} \neq \g{y}$ and $$D(\g{x},\g{y}) = F(d_1(x_1,y_1),\dots,d_n(x_n,y_n))= F(a_1, \dots, a_n) = 0,$$
therefore we arrive at the contradiction.

Now, assume on the contrary that condtion (\ref{w}) is not satisfied. Then, there exist $\g{a}=(a_1,\dots,a_n), \g{b} =(b_1,\dots,b_n), \g{c} =(c_1,\dots,c_n) \in \R_+^n$ such that, for any $i \in \{1,\dots, n\}$, $(a_i,b_i,c_i)$ is a $G_i$-triangle triplet, but $F(\g{c}) > h(F(\g{a}),F(\g{b}))$. For $i \in \{1,\dots,n\}$ define the space $(X_i,d_i)$ in the following way. Let $X_i = \{x_i,y_i,z_i\}$ (where $x_i,y_i,z_i$ are arbitrary but not necessarily distinct points -- in the case where some of the values below equal $0$) and 
\begin{eqnarray*}
d_i(x_i,y_i) &= d_i(y_i,x_i) &= a_i,\\
d_i(y_i,z_i) &= d_i(z_i,y_i) &= b_i,\\ 
d_i(x_i,z_i) &= d_i(z_i,x_i) &= c_i,\\
d_i(x_i,x_i) &= d_i(y_i,y_i) &= d_i(z_i,z_i) = 0.
\end{eqnarray*}
Then, for any $i \in \{1,\dots, n\}$, $(X_i,d_i)$ is $G_i$-metric space. Indeed, the first two axioms hold trivially. The third one follows from the fact that $g_i$ are nonreducing and the assumpion that $(a_i,b_i,c_i)$ is a $G_i$-triangle triplet. Let $\g{x}=(x_1,\dots,x_n),\g{y}=(y_1,\dots,y_n),\g{z}=(z_1,\dots,z_n).$ We have 
\begin{eqnarray*}
D(\g{x},\g{z}) &=&F(d_1(x_1,z_1),\dots,d_n(x_n,z_n))\\
&=&F(\g{c})> h(F(\g{a}),F(\g{b}))\\
&=& h(F(d_1(x_1,y_1),\dots,d_n(x_n,y_n)),F(d_1(y_1,z_1),\dots,d_n(y_n,z_n)))
\\ &=&h(D(\g{x},\g{y}),D(\g{y},\g{z})).
\end{eqnarray*}
Thus, we arrive at another contradiction, which finishes the proof.
\end{proof}
\begin{example} \label{srednia aryt}
Let $n\in \N, F\colon \R^n_+ \to \R_+$ be given by the formula $$F(a_1,\dots,a_n)=\frac{a_1+\dots+a_n}{n}$$ and let $K \geqslant 1$. We will show that $F$ is $(B_K,\dots,B_K)-B_K$ metric preserving. $F$ is obviously amenable. We need to prove that it satisfies condition (\ref{w}). For $i \in \{1,\dots,n\}$ let $a_i, b_i,c_i \in \R_+$ be such that $(a_i,b_i,c_i) \in \triangle_{B_K}$ for $i =1,2,\dots,n.$ Then we have
\begin{eqnarray*}
F(c_1,\dots,c_n) &=& \frac{c_1+\dots+c_n}{n}\\
&\leqslant& \frac{K(a_1+b_1)+\dots+K(a_n+b_n)}{n}\\
&=&K\cdot \frac{a_1+\dots+a_n}{n}+K\cdot \frac{b_1+\dots+b_n}{n} \\&=& K(F(a_1,\dots,a_n)+F(b_1,\dots,b_n)).
\end{eqnarray*}
Similarly, we prove that 
$$F(a_1,\dots,a_n) \leqslant K(F(c_1,\dots,c_n)+F(b_1,\dots,b_n))$$
and
$$F(b_1,\dots,b_n) \leqslant K(F(c_1,\dots,c_n)+F(a_1,\dots,a_n)).$$
Therefore, $(F(a_1,\dots,a_n),F(b_1,\dots,b_n),F(c_1,\dots,c_n)) \in \triangle_{B_K}$.
By Theorem \ref{theorem:main}, $F$ is $(B_K,\dots,B_K)-B_K$ metric preserving. In particular (for $K=1$), $F$ is $(n)$-metric preserving.

\end{example}
While arithmetic mean well preserves the additional properties of semimetric spaces, this is not the case when geometric mean is considered.
\begin{example}
Let $F\colon \R^n_+ \to \R_+$ be given by the formula $$F(a_1, a_2, \dots,a_n)=\sqrt[n]{a_1 a_2 \dots a_n}.$$ Clearly, $F$ is not amenable for $n\geqslant 2$. Due to Theorem \ref{theorem:main}, it cannot preserve any semimetric properties.
\end{example}
The insightful Reader should notice already that if conditions $G_i$'s and $H$ are the $b$-metric or strong $b$-metric inequalities, then the aforementioned theorem provides the characterization only for fixed values of relaxation constants $K$ in the respective definitions of those inequalities. We shall now try to extend this characterization to all values of $K$. Let us begin with the following remark.

\begin{remark}\label{useful:remark}
For semimetric space $(X,d)$  satisfying either $(B_K)$ or $(S_K)$ with relaxation constant $K\geqslant 1$ it is worth pointing out that it satisfies the same condition for any $K^\prime \geqslant K$. Indeed, for any three points $x,y,z\in X$ we have (reasoning for condition $(S_K)$ is almost unchanged):
\[
d(x,z)\leqslant K\left( d(x,y)+d(y,z)\right) \leqslant K^\prime \left( d(x,y)+d(y,z)\right).
\] 
We shall use this observation later on.
\end{remark}

For subsequent results we will need two Lemmas of Turobo\'{s} \cite[Lemma 3.1, 3.2]{arxiv} which allow us to combine multiple $b$-metric spaces into a single one. These results are generalizations of well-known results from the theory of metric spaces.

\begin{lemma}\label{lemma3.1}
Let $(X_1, d_1)$, $(X_2, d_2)$ be a pair of disjoint, [strong] $b$-metric spaces  with relaxation constants $K_1, K_2 \geqslant 1$. Additionally, assume that both spaces have finite diameter, i.e., \[r_1 :=
\operatorname{diam}_{d_1} (X_1) = \sup_{x,y\in X_1} d_1(x,y) < \infty \]
\[r_2 :=
\operatorname{diam}_{d_2} (X_2) = \sup_{x,y\in X_2} d_2(x,y) < \infty \]
and $\g{X}:=X_1 \cup X_2$ has at least three elements. Then, there exists a [strong] $b$-metric $d : \g{X} \times \g{X} \to [0, +\infty)$ with relaxation constant $K := \max\{K_1, K_2\}$, $\operatorname{diam}(\g{X}) = \max\{r_1, r_2\}$ and satisfying
\[
d_{|X_1\times X_1} = d_1 \; \; \land \; \; d_{|X_2\times X_2} = d_2.
\]
\end{lemma}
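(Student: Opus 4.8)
The plan is to extend $d_1$ and $d_2$ to $\g{X}$ by assigning a single constant value to every \emph{cross pair}, i.e., every pair whose two points lie in different components. First I would set $K := \max\{K_1,K_2\}$ and $r := \max\{r_1,r_2\}$, and invoke Remark \ref{useful:remark} so that both $(X_1,d_1)$ and $(X_2,d_2)$ may be regarded as [strong] $b$-metric spaces with the common relaxation constant $K$. Since $\g{X}$ has at least three elements and the two components are disjoint, at least one component contains two distinct points, whence $r>0$; this will matter for the identity axiom. I then define $d\colon \g{X}^2 \to [0,+\infty)$ by $d(x,y)=d_1(x,y)$ for $x,y\in X_1$, $d(x,y)=d_2(x,y)$ for $x,y\in X_2$, and $d(x,y)=\tfrac{r}{2}$ whenever $x$ and $y$ belong to different components.

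Verifying (S1) and (S2) is immediate: symmetry is clear from the construction, while $d(x,y)=0$ forces $x,y$ to lie in the same component, since cross pairs receive the positive value $r/2$, and then $x=y$ by the corresponding axiom for $d_1$ or $d_2$. The substance of the proof is the semi-triangle condition with constant $K$, which I would establish by a case analysis on how the three points $x,y,z$ are distributed between $X_1$ and $X_2$. If all three lie in one component, the inequality is inherited directly from $d_1$ or $d_2$ together with Remark \ref{useful:remark}; the only genuine case is when two of the points lie in one component and the third in the other, say $x,y$ in one component and $z$ in the other, together with its relabelings.

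The computational core is checking that the constant $c:=r/2$ makes every inequality arising from such a mixed configuration hold. Any inequality whose left-hand side is a cross distance reads $c \leqslant K\delta + c$ or $c \leqslant c + K\delta$ with $\delta \geqslant 0$, and holds trivially because $K\geqslant 1$. The delicate inequalities are those bounding an intra-component distance by cross distances: for $(B_K)$ one needs $d_1(x,y)\leqslant K(c+c)=2Kc$, and for $(S_K)$ one needs $d_1(x,y)\leqslant Kc+c=(K+1)c$. Since $d_1(x,y)\leqslant r_1 \leqslant r = 2c$, both reduce to $2c \leqslant 2Kc$ (respectively $2c \leqslant (K+1)c$), which is exactly where the hypothesis $K\geqslant 1$ is used. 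Thus $c=r/2$ lies in the admissible range for both axioms simultaneously. Finally, because every cross distance equals $c=r/2\leqslant r$ while the intra-component distances already have supremum $\max\{r_1,r_2\}=r$, we obtain $\operatorname{diam}(\g{X})=r=\max\{r_1,r_2\}$, and $d$ restricts to $d_1$ and $d_2$ by definition.

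I expect the main obstacle to be organizational rather than conceptual: one must enumerate the relabelings within the mixed case and confirm that the same constant $c$ works for all of them at once and for both the $(B_K)$ and $(S_K)$ variants. The conceptual insight, namely that a flat cross distance of half the common diameter threads the needle between the lower bounds forced by the triangle-type inequalities and the upper bound forced by the diameter constraint, is what makes the case analysis go through; and the hypothesis that $\g{X}$ has at least three points is precisely what guarantees $r>0$, so that this constant is strictly positive and (S1) survives across components.
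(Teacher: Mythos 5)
Your construction is correct: the constant cross-distance $r/2$ with $r=\max\{r_1,r_2\}$ satisfies all the required inequalities for both the $(B_K)$ and $(S_K)$ variants, and your observation that the three-element hypothesis forces $r>0$ (so that (S1) survives) is exactly the role that hypothesis plays. Note that the paper itself does not prove this lemma but imports it from \cite{arxiv}; your amalgamation argument is the standard one for such gluing results and is a valid self-contained proof of the statement as quoted.
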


\begin{lemma}\label{lemma3.2}
Let $(X_n,d_n)_{n\in\N}$ be an increasing family of [strong] $b$-metric spaces with a common, fixed relaxation constant $K > 1$  i.e. for every pair of indices $k_1\leqslant k_2$ we have $X_{k_1} \subseteq X_{k_2}$ and $d_{k_1} \subseteq d_{k_2}$ (which means $d_{k_1}(x, y) = d_{k_2}(x, y)$ for all $x, y \in X_{k_1}$). Then a pair $(\g{X}, d)$, where
\[
\g{X}:=\bigcup_{n\in\N} X_n, \qquad \qquad d(x,y):= d_{k_{\min} }(x, y)
\] 
and $k_{\min}$ is any index such that $x, y \in X_{k_{\min}}$
is a [strong] $b$-metric space with the relaxation constant $K \geqslant 1$.
\end{lemma}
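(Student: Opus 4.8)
The plan is to treat $(\g{X},d)$ as the direct limit of the chain $(X_n,d_n)_{n\in\N}$, verifying the required axioms one at a time and each time reducing a claim about $d$ to the corresponding property of a single space $X_k$ from the chain. Before any axiom can be checked, though, I would first confirm that $d$ is well defined. Given $x,y\in\g{X}$ there are indices $m,n$ with $x\in X_m$ and $y\in X_n$; putting $k=\max\{m,n\}$ and using $X_m,X_n\subseteq X_k$ shows that both points lie in $X_k$, so at least one admissible index exists. If $x,y\in X_{k_1}$ and $x,y\in X_{k_2}$, say $k_1\leqslant k_2$, then the compatibility condition $d_{k_1}\subseteq d_{k_2}$ gives $d_{k_1}(x,y)=d_{k_2}(x,y)$, so the value $d(x,y)$ does not depend on the chosen index. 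This is the only place where the consistency hypothesis $d_{k_1}\subseteq d_{k_2}$ is genuinely used.

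Next I would verify the two semimetric axioms. Fixing any index $k$ with $x,y\in X_k$, we have $d(x,y)=d_k(x,y)$, and since $d_k$ is a semimetric on $X_k$, both (S1) (that $d(x,y)=0$ iff $x=y$) and (S2) (symmetry) are inherited immediately from $d_k$.

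The heart of the argument is the (strong) $b$-metric inequality. Given three points $x,y,z\in\g{X}$, choose indices with $x\in X_p$, $y\in X_q$, $z\in X_r$ and set $k=\max\{p,q,r\}$; by the increasing property all three points lie in the single space $X_k$. Well-definedness then yields $d(x,z)=d_k(x,z)$, $d(x,y)=d_k(x,y)$ and $d(y,z)=d_k(y,z)$, so applying the inequality valid in $(X_k,d_k)$ with constant $K$ gives, respectively, $d(x,z)\leqslant K\bigl(d(x,y)+d(y,z)\bigr)$ in the $b$-metric case or $d(x,z)\leqslant K\,d(x,y)+d(y,z)$ in the strong case. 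As $K$ is common to the whole chain, the resulting relaxation constant for $d$ is the same $K$, which settles the bracketed and unbracketed versions simultaneously.

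I do not expect a serious obstacle here: the entire difficulty is organisational, namely the observation that any \emph{finite} collection of points of $\g{X}$ is contained in one member of the chain, which lets every statement about $d$ be transported from a single $(X_k,d_k)$. The only subtlety worth stating carefully is the independence of $d(x,y)$ from the choice of $k_{\min}$, which rests precisely on the nesting condition $d_{k_1}\subseteq d_{k_2}$.
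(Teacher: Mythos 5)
Your argument is correct and complete: well-definedness via the nesting condition, then reduction of each axiom (including the $(B_K)$ or $(S_K)$ inequality) to a single member $X_k$ of the chain containing the finitely many points involved. Note that the paper does not prove this lemma at all — it imports it from the cited work of Turobo\'s \cite{arxiv} — so your write-up simply supplies the standard direct-limit verification that the authors took for granted.
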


Now, we are able to proceed with the following four-in-one lemma, describing the behaviour of $(n)$-$b$-metric preserving functions as well as those which are $(n)$-strong-$b$-metric-$b$-metric, $(n)$-strong-$b$-metric or
$(n)$-$b$-metric-strong-$b$-metric preserving. As the proof in all cases follows the same train of thought, we present only one of its variants.

\begin{lemma}\label{mainlemma}
Let $n\in\N$. Assume that $F$ is a function such that for every $n$-element collection of [strong] $b$-metric spaces $(X_i,d_i)$ with fixed relaxation constants $K_{i}$, $i=1,\dots,n$, the product space $(\g{X},D )$ -- where $\g{X}:=\prod_{i=1}^n X_i$ and $D$ is given by \eqref{functio} -- is a [strong] $b$-metric space. Then, there exists $K^\prime \geqslant 1$ such that:
\begin{itemize}
\item[a)] for every [strong] $(B_{K_{1}},\dots,B_{K_{n}})$-triangle triplets $\g{a},\g{b},\g{c}$, the values $f(\g{a})$, $f(\g{b})$ and $f(\g{c})$ form a [strong] $B_{K^\prime}$-triangle triplet.
\item[b)] the relaxation constant of the resulting space $(\g{X},D)$ is bounded by $K^\prime$.
\end{itemize}
\end{lemma}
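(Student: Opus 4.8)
The plan is to prove parts (a) and (b) simultaneously by recognising them as two faces of a single uniform statement about triangle triplets, and then to force that statement by a compactness-style contradiction built on the gluing Lemmas \ref{lemma3.1} and \ref{lemma3.2}. First I would note that, since every product space $(\g{X},D)$ is a $b$-metric space, the first semimetric axiom forces $F$ to be amenable, by the very same three-argument reasoning as in the ``$\Rightarrow$'' part of Theorem \ref{theorem:main}. Next, for a \emph{fixed} constant $K'$, Theorem \ref{theorem:main} applied with $H = B_{K'}$ says that $F$ maps every $(B_{K_1},\dots,B_{K_n})$-triangle triplet to a $B_{K'}$-triangle triplet if and only if $F$ is $(B_{K_1},\dots,B_{K_n})-B_{K'}$-preserving, i.e. if and only if every resulting product space has relaxation constant at most $K'$. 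Thus (a) and (b) are equivalent given amenability, and it suffices to produce a single $K' \geqslant 1$ for which the triangle-triplet image condition \eqref{w} holds; I will focus on the plain $b$-metric variant, the strong case being identical with the bracketed versions of the lemmas.

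Before arguing by contradiction I would dispose of the degenerate triplets. If $(\g{a},\g{b},\g{c}) \in \triangle_{(B_{K_1},\dots,B_{K_n})}$ and one of $F(\g{a}),F(\g{b}),F(\g{c})$ vanishes, say $F(\g{a})=0$, then amenability gives $\g{a}=\g{0}$; consequently each $(0,b_i,c_i)$ is a $B_{K_i}$-triangle triplet, which by Definition \ref{onedimtt} forces $b_i=c_i$ for all $i$, so $\g{b}=\g{c}$ and $(0,F(\g{b}),F(\g{b}))$ is automatically a $B_{K'}$-triangle triplet. Hence any failure of the image condition occurs only for triplets with all three images strictly positive, where it amounts, after a permutation, to a genuine inequality $F(\g{c}) > K'\bigl(F(\g{a})+F(\g{b})\bigr)$.

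Now suppose, towards a contradiction, that no uniform $K'$ works. Then for every $k\in\N$ there is a triplet $(\g{a}^{(k)},\g{b}^{(k)},\g{c}^{(k)}) \in \triangle_{(B_{K_1},\dots,B_{K_n})}$ with $F(\g{c}^{(k)}) > k\bigl(F(\g{a}^{(k)})+F(\g{b}^{(k)})\bigr)$. For each $k$ I would realise this triplet by a product of three-point spaces $(X_i^{(k)},d_i^{(k)})$ of relaxation constant $K_i$, exactly as in the ``$\Rightarrow$'' direction of Theorem \ref{theorem:main}; in this product there are three points whose pairwise $D$-distances are $F(\g{a}^{(k)}),F(\g{b}^{(k)}),F(\g{c}^{(k)})$ and therefore violate the $(B_k)$-inequality. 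The heart of the argument is then to assemble these countably many witnesses into one collection. Fixing a coordinate $i$ and passing to disjoint copies, I would glue the finite spaces $X_i^{(1)},X_i^{(2)},\dots$ one at a time by repeated application of Lemma \ref{lemma3.1} -- each finite stage has finite diameter, so its hypotheses are met and the relaxation constant stays $K_i$ -- obtaining an increasing tower; Lemma \ref{lemma3.2} then produces a single $b$-metric space $(X_i,d_i)$ of relaxation constant $K_i$ containing every $X_i^{(k)}$ with all distances preserved. The product $(\g{X},D)$ over this one fixed collection contains, for each $k$, the three points witnessing the $(B_k)$-violation, so $D$ fails $(B_k)$ for all $k$ and cannot be a $b$-metric, contradicting the hypothesis. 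Hence a uniform $K'$ exists, which gives (a), and (b) follows from it via Theorem \ref{theorem:main}.

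The step I expect to be the main obstacle is the gluing in the last paragraph: one must combine infinitely many finite $b$-metric spaces of \emph{unbounded} diameter into a single space of the prescribed relaxation constant, which is exactly why the two-stage use of Lemmas \ref{lemma3.1} and \ref{lemma3.2} is required rather than a single gluing. Minor care is also needed when some $K_i=1$, since Lemma \ref{lemma3.2} is stated for $K>1$; there one may either treat the increasing union of metric spaces directly, as the triangle inequality on any three points is inherited from whichever stage contains them, or invoke Remark \ref{useful:remark} to replace $K_i$ by a slightly larger constant.
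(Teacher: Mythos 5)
Your proposal is correct and follows essentially the same route as the paper: the same contradiction hypothesis producing a sequence of triplets violating $(B_k)$ for each $k$, the same two-stage gluing via Lemmas \ref{lemma3.1} and \ref{lemma3.2} to assemble all witnesses into one fixed collection of $b$-metric spaces with constants $K_i$, and the same final argument that the resulting product fails $(B_{K'})$ for every $K'$. Your explicit treatment of the degenerate triplets (via amenability) and of the $K_i=1$ case in Lemma \ref{lemma3.2} covers details the paper's proof passes over only informally.
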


\begin{proof}
Due to the fact, that each version of this lemma is proved by the exact same method, we will simply prove the version concerning $b$-metric spaces.

Let $f$ be a function satisfying the assumptions of our Lemma. For the sake of convenience, let us denote $\g{K}:=(K_{1},\dots,K_{n})$. 

Let us suppose that the contrary to a) holds. In particular, it implies that for every $k\in\mathbb{N}$ there exists a $(B_{K_1},\dots,B_{K_n})$-triangle triplet\footnote{Throughout the paper for denoting vectors coordinates we use subscript indexation. Thus, the sequence of vectors $\left(\g{a}^{(n)}\right)_{n\in\N}$ will be marked by their upper indexation. For example, $4$-th coordinate of $\g{a}^{(6)}$ will be denoted by $a_4^{(6)}$.} $(\g{a}^{(k)},\g{b}^{(k)},\g{c}^{(k)})$ for which $\left(f(\g{a}^{(k)}),f(\g{b}^{(k)}),f(\g{c}^{(k)})\right)$ fails to be a one-dimensional $B_k$-triangle triplet. 

Without loss of generality, we will assume that
\begin{equation}\label{actuallyfork}
\forall_{k\in\N} \qquad 
F(\g{a}^{(k)})>k\cdot\left( F(\g{b}^{(k)})+F(\g{c}^{(k)})\right),
\end{equation}
but this additional assumption on the order of elements appearing in the discussed inequality serves only increasing the clarity of the proof.
	
	Fix $j\in\{1,\dots,n\}$.
	Let $\hat{X}^{(1)}_j = X^{(1)}_j:=\{(1,j,1),(1,j,2),(1,j,3)\}$ and define $b$-metric $d_j^{(1)}$ on this set as follows: 
	\begin{eqnarray*}
	d_j^{(1)}((1,j,1),(1,j,2))&=&a_j^{(1)} \\ d_j^{(1)}((1,j,2),(1,j,3))&=&b_j^{(1)},  \\ d_j^{(1)}((1,j,1),(1,j,3))&=&c_j^{(1)}. 
	\end{eqnarray*} 
	
	Remaining values of function $d_j^{(1)}$ are the result of semimetric axioms. An easy observation is that when $a^{(1)}_j, b^{(1)}_j, c^{(1)}_j$ are non-zero, then $d_j^{(1)}$ is a $b$-metric with relaxation constant $K_j$ (this fact is an immediate consequence of the Definition \ref{multidimtt}). 
	
	However, if one or more of these values (for example $a_j^{(1)}$) equals zero, then we can consider spaces consisting of less than three points simply by glueing some of those points together. As the Reader will see in the latter part of the proof, this identification of two or three points as a single one for these situations holds for the rest of the reasoning. Therefore, we will not focus on these special cases and for the sake of clarity we will assume that $a_j^{(k)},b_j^{(k)},c_j^{(k)}>0$.
	
    Now, assume that we have defined $\left(\hat{X}_j^{(k-1)},\hat{d}_j^{(k-1)}\right)$, which is a finite (thus bounded) $b$-metric space with relaxation constant $K_j$. We now proceed with defining $\hat{X}^{(k)}$. Put
	\(
    X^{(k)}_j:=\{(k,j,1),(k,j,2),(k,j,3)\} \) and  \(
    d^{(k)}_j:X_j^{(k)}\times X_j^{(k)} \to [0,+\infty),
    \)
    as a unique semimetric satisfying 
    \begin{eqnarray*}
    {d^{(k)}_j}((k,j,1),(k,j,2)) &=& a^{(k)}_j, \\
    {d^{(k)}_j}((k,j,2),(k,j,3)) &=& b^{(k)}_j, \\
    {d^{(k)}_j}((k,j,1),(k,j,3)) &=& c^{(k)}_j.
    \end{eqnarray*}
    The space $(X^{(k)}_j,d^{(k)}_j)$ is a bounded $b$-metric space with relaxation constant $K_j$. We can now use Lemma \ref{lemma3.1}, to obtain $b$-metric space with same relaxation constant, defined on a set $\hat{X}^{(k)}_j:=X^{(k)}_j\cup \hat{X}^{(k-1)}_j$. The Lemma guarantees that the extension $\hat{d}^{(k)}_j$ equals $d^{(k)}_j$ on $X^{(k)}_j\times X^{(k)}_j$ and $\hat{d}^{(k-1)}_j$ on $\hat{X}^{(k-1)}_j\times\hat{X}^{(k-1)}_j$. 
    
    By this step-wise procedure, we obtain a sequence of $b$-metric spaces $\left(\left(\hat{X}^{(k)}_j,\hat{d}^{(k)}_j\right) \right)_{n\in\mathbb{N}}$. Due to Lemma \ref{lemma3.2}, we thus obtain a $b$-metric space $(X_j,D_j)$ where $    X_j:=\bigcup_{k\in\mathbb{N}} X^{(k)}_j$ and
    \[
    D_j(x,y):=\hat{d}^{(k_{x,y})}_j(x,y) \text{ where } k_{x,y}:=\min\{ k \in \mathbb{N} \ : \ x,y\in X^{(k)}_j \}.
    \]
    
	Repeating this procedure for each $j$ we obtain a collection of $n$ $b$-metric spaces with relaxation constants $K_1,\dots, K_n$ respectively. Define $(\g{X},D)$ as in the \eqref{functio}, that is: 
	\[
	\g{X}:=\prod_{j=1}^n  X_j, \qquad D(\g{x},\g{y}):= F\left(D_1(x_1,y_1),\dots, D_n(x_n,y_n)\right),
	\]
    where $\g{x}=(x_1,\dots,x_n) \in \g{X}$, $\g{y}$ likewise. 
    To prove our claim, we simply need to ensure ourselves that $(\g{X},D)$ fails to be a $b$-metric space.
    
    Suppose otherwise, i.e., $(B_{K^\prime})$ inequality holds in $(\g{X},D)$ for some $K^\prime \geqslant 1$. Consider any $k_0\geqslant K^\prime$. By Remark \ref{useful:remark} if $b$-metric inequality with constant $K^\prime$ holds, then condition $(B_{k_0})$ holds as well. Consider elements $\g{x},\g{y},\g{z} \in X$ of the form
    \begin{eqnarray*}
    \g{x}&:=& ((k_0,1,1), (k_0,2,1),\dots, (k_0,n,1));\\
	\g{y}&:=& ((k_0,1,2), (k_0,2,2),\dots, (k_0,n,2));\\
	\g{z}&:=& ((k_0,1,3), (k_0,2,3),\dots, (k_0,n,3)). 
    \end{eqnarray*}
    Notice that for each $j$ the inequality
    \begin{eqnarray*}
    D_j((k_0,j,1),(k_0,j,2)) &=& a_j^{(k_0)} 
    \leqslant K_j \cdot ( b_j^{(k_0)} + c_j^{(k_0)} )\\ &\hspace{-2.5cm}=&\hspace{-1.5cm} K_j\cdot \left(
D_j((k_0,j,2),(k_0,j,3)) + D_j((k_0,j,3),(k_0,j,1)) \right) 
    \end{eqnarray*}
    holds. However, from the assumption \eqref{actuallyfork} we obtain that
    \begin{eqnarray*}
    D(\g{x},\g{y}) = F(\g{a}^{(k_0)}) 
    &>& k_0 \cdot \left( F(\g{b}^{(k_0)}) + F(\g{c}^{(k_0)}) \right)\\
    &=& k_0\cdot \left( D(\g{x},\g{z}) + D(\g{z},\g{y}) \right).
    \end{eqnarray*}
    This proves, that $D$ fails to satisfy $(B_{k_0})$ and, by Remark \ref{useful:remark}, it also fails to do so with condition $(B_{K^\prime})$. Due to $K^\prime$ being arbitrary, the semimetric space $(\g{X},D)$ fails to be a $b$-metric space at all, a contradiction. Thus $F$ satisfies a).
    The condition b) is a straightforward consequence of a).
\end{proof} 

\begin{corollary}\label{waznecorollary}
If $F\in P^n_B$ then for all $K_1,\dots,K_n\geqslant 1$ we have that $F\in P_{(B_{K_1},\dots,B_{K_n})-B_K}$ for some $K\geqslant 1$.
\end{corollary}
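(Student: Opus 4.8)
The goal is to deduce Corollary \ref{waznecorollary} from Lemma \ref{mainlemma}, so the plan is essentially to observe that the Corollary is almost a direct restatement of part a) of that Lemma, once the definitions are unwound. The main work is bookkeeping: translating the hypothesis $F \in P^n_B$ into the form required to invoke Lemma \ref{mainlemma}, and translating the conclusion of Lemma \ref{mainlemma} back into the language of the families $P_{(B_{K_1},\dots,B_{K_n})-B_K}$.

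First I would fix arbitrary relaxation constants $K_1,\dots,K_n \geqslant 1$. The assumption $F \in P^n_B$ means, by definition of the family of $(n)$-$b$-metric preserving functions (and by Remark \ref{uwaga}), that whenever we feed $F$ any collection of $b$-metric spaces $(X_i,d_i)$ -- in particular ones whose relaxation constants are exactly $K_1,\dots,K_n$ -- the product function $D$ given by \eqref{functio} is again a $b$-metric on $\g{X}$ (for \emph{some} resulting relaxation constant). This is precisely the hypothesis of Lemma \ref{mainlemma} in its (plain) $b$-metric incarnation, so I may apply that Lemma.

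Lemma \ref{mainlemma} then yields a single constant $K' \geqslant 1$ such that every $(B_{K_1},\dots,B_{K_n})$-triangle triplet $(\g{a},\g{b},\g{c})$ is carried by $F$ to a $B_{K'}$-triangle triplet. Setting $K := K'$, I claim this $K$ witnesses $F \in P_{(B_{K_1},\dots,B_{K_n})-B_K}$. To see this, I invoke Theorem \ref{theorem:main} with $G_i = B_{K_i}$ and $H = B_{K}$: it suffices to check that $F$ is amenable and that condition \eqref{w} holds. Condition \eqref{w} is exactly the conclusion of part a) of Lemma \ref{mainlemma} with the identification $K=K'$. For amenability, I would note that $F$ must be amenable because it preserves $b$-metrics at all (were $F$ non-amenable, the same two-point construction used in the "$\Rightarrow$" direction of the proof of Theorem \ref{theorem:main} would produce a product space on which $D$ vanishes between distinct points, contradicting that $D$ is a $b$-metric); alternatively amenability follows since $F \in P^n_B$ applied to the trivial constant inequalities $G_i = B_{K_i}$ already forces it.

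I do not anticipate a serious obstacle here, since the Corollary is an immediate repackaging of Lemma \ref{mainlemma}. The only point requiring a little care is the quantifier structure: Lemma \ref{mainlemma} produces one $K'$ that works uniformly for the fixed tuple $(K_1,\dots,K_n)$, and the Corollary correctly states exactly this -- for all $K_1,\dots,K_n$ there \emph{exists} $K$ -- rather than a single $K$ uniform over all tuples (which would be false, and is precisely why Remark \ref{uwaga} expresses $P^n_B$ as an intersection of \emph{unions}). So the proof is just: fix the tuple, apply Lemma \ref{mainlemma} to obtain $K'$, set $K=K'$, and conclude via Theorem \ref{theorem:main}.
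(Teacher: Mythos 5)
Your proposal is correct and follows exactly the route the paper intends: the corollary is stated without proof immediately after Lemma \ref{mainlemma} precisely because it is the direct translation you describe (fix the tuple $(K_1,\dots,K_n)$, apply the lemma to get a single $K'$, and conclude via part a) together with Theorem \ref{theorem:main}, or even more directly via part b)). Your handling of amenability and of the quantifier order ("for all tuples there exists $K$", not a uniform $K$) is also right.
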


\begin{remark} From Lemma \ref{mainlemma} and Theorem \ref{theorem:main} it is possible to obtain an $(n)$-dimensional generalization of Theorem 4.1 from \cite{arxiv}. Among the others, it states that $F\in P^n_{MB}$ if and only if there exists $K\geqslant 1 $ such that for any $(M,\dots,M)$-triangle triplet $(\g{a},\g{b},\g{c})$, the values $\left(F(\g{a}),F(\g{b}),F(\g{c})\right)$ form a $B_K$-triangle triplet. Similar characterizations can be given for other classes like $P^n_{BS}$, $P^n_{S}$ and so forth.
\end{remark}

The conditions above are, in general, hard to verify. Therefore a need for more convenient conditions arises. We begin with the following, relatably easy to check sufficient condition.

\begin{proposition}\label{propo:3.1}
Let $n \in \N$ and $F\colon\R_+^n\to \R_+$ be a function. If $F$ is amenable, monotone and quasi-subadditive with a constant $s \geqslant 1$, then $F$ is ($n$)-$b$-metric preserving.
\end{proposition}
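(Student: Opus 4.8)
The plan is to verify the definition of $P^n_B$ directly: fix an arbitrary collection of $b$-metric spaces $(X_i,d_i)$ with relaxation constants $K_i$, and show that the induced function $D$ from \eqref{functio} is a $b$-metric on $\g{X}$ with \emph{some} relaxation constant $K'$ (the constant is allowed to depend on the $K_i$, which is precisely what Corollary \ref{waznecorollary} permits). The semimetric axioms come for free: symmetry of $D$ follows from symmetry of each $d_i$, and (S1) is exactly amenability of $F$, since $D(\g{x},\g{y})=0$ iff every $d_i(x_i,y_i)=0$ iff $\g{x}=\g{y}$. So the entire content lies in producing the relaxed triangle inequality.

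For the triangle inequality I would fix $\g{x},\g{y},\g{z}\in\g{X}$ and set $a_i=d_i(x_i,z_i)$, $b_i=d_i(x_i,y_i)$, $c_i=d_i(y_i,z_i)$, so that the claim is $F(\g{a})\leqslant K'\bigl(F(\g{b})+F(\g{c})\bigr)$. The first step is to put $K:=\max_i K_i$ and $m:=\lceil K\rceil$, a positive integer with $m\geqslant K\geqslant 1$. Since each $d_i$ is a $b$-metric with constant $K_i\leqslant K$, we get $a_i\leqslant K_i(b_i+c_i)\leqslant m(b_i+c_i)$ for every $i$, i.e. $\g{a}\preceq m\g{b}+m\g{c}$. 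Monotonicity of $F$ then yields $F(\g{a})\leqslant F(m\g{b}+m\g{c})$, which reduces everything to estimating $F$ on a sum of integer multiples.

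From here I would invoke quasi-subadditivity in two ways. Splitting the sum gives $F(m\g{b}+m\g{c})\leqslant s\bigl(F(m\g{b})+F(m\g{c})\bigr)$; and to absorb the integer factor $m$, an easy induction on the inequality $F(\g{u}+\g{v})\leqslant s(F(\g{u})+F(\g{v}))$ produces a bound $F(m\g{u})\leqslant C_m\,F(\g{u})$, where $C_m$ is defined by $C_1=1$ and $C_m=s(C_{m-1}+1)$ and depends only on $m$ and $s$. Combining these two estimates gives $F(\g{a})\leqslant sC_m\bigl(F(\g{b})+F(\g{c})\bigr)$, so $D$ satisfies $(B_{K'})$ with $K':=sC_m\geqslant 1$, completing the proof.

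The one genuinely delicate point is that quasi-subadditivity controls $F$ only on sums, whereas the coordinatewise estimate naturally produces the \emph{dilation} $K(\g{b}+\g{c})$ with a non-integer $K$. My way around this is to majorize $\g{a}$ coordinatewise by the integer multiple $m(\g{b}+\g{c})$ and pass through $F$ using monotonicity, after which the dilation by $m$ becomes an $m$-fold sum handled by the inductive estimate $F(m\g{u})\leqslant C_m F(\g{u})$; monotonicity is exactly the hypothesis that lets this substitution happen, and amenability and symmetry dispatch the remaining axioms.
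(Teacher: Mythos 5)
Your proposal is correct and follows essentially the same route as the paper's proof: round the relaxation constant up to an integer, use monotonicity to pass from $\g{a}$ to the dilated sum, and absorb the integer dilation via an inductive application of quasi-subadditivity (your recursion $C_m=s(C_{m-1}+1)$ produces exactly the paper's constant $\frac{s^m-s}{s-1}+s^{m-1}$). The only difference is the trivial reordering of splitting the sum before versus after absorbing the dilation, which yields the same final constant.
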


\begin{proof}
Let $\{(X_1, d_1), \dots , (X_n, d_n)\}$ be a collection of $b$-metric spaces with constants $K_1, \dots , K_n$ respectively. Let $\g{X} := X_1 \times \dots \times X_n$. 
Consider a function $D\colon \g{X} \times \g{X} \to \R_+$ given by $$D(\g{x}, \g{y})=F(d_1(x_1, y_1), \dots, d_n(x_n, y_n))$$ 
for each $\g{x}, \g{y} \in \g{X}$. Since $F$ is amenable, $D(\g{x}, \g{y})=0$ if and only if $d_1(x_1, y_1)= \dots =d_n(x_n, y_n)=0$, which is equivalent to (as $d_1, \dots , d_n$ are all $b$-metrics) $\g{x}=\g{y}$.
The symmetry of $D$ clearly comes from the symmetry of $d_1, \dots , d_n$.

Lastly, we shall prove that $D$ is a $b$-metric. Let $K\in \N$ be such that $K\geqslant \max\{K_1, \dots , K_n\}$. Put 
$$s':= \frac{s^{K+1}-s^2}{s-1}+s^{K}.$$ 
Let $\g{x}, \g{y}, \g{z} \in\g{X}$ and observe that from the quasi-subadditivity of $F$ it follows that for any natural $N \geqslant 2$ and any $(a_1,\dots,a_n) \in \R^n_+$, we have
\begin{eqnarray*}
F(Na_1,\dots,Na_n) &=& F((N-1)a_1+a_1,\dots,(N-1)a_n + a_n) \\
&\leqslant& sF((N-1)a_1,\dots,(N-1)a_n)+sF(a_1,\dots,a_n).
\end{eqnarray*}
If $N \geqslant 3$, then we can repeat this procedure for $N-1$, obtaining
\begin{eqnarray*}
F(Na_1,\dots,Na_n) &\leqslant& s^{2}F((N-2)a_1,\dots,(N-2)a_n)\\ & & + s^{2}F(a_1,\dots,a_n) + sF(a_1,\dots,a_n).
\end{eqnarray*}
An easy induction leads us to the following inequality
\begin{eqnarray*}
F(Na_1,\dots,Na_n) &\leqslant& s^{N-1}F(a_1,\dots,a_n) + s^{N-1}F(a_1,\dots,a_n) \\ & &+ s^{N-2}F(a_1,\dots,a_n) + \dots + sF(a_1,\dots,a_n)\\
&=& \left(\sum\limits_{i=1}^{N-1} s^i+s^{N-1}\right)F(a_1,\dots,a_n) \\
&=& \left(\frac{s^N-s}{s-1}+s^{N-1}\right)F(a_1,\dots,a_n). 
\end{eqnarray*}
Using the above inequality, the monotonocity of $F$ and the fact that $d_i(x_i,y_i)\leqslant K(d_i(x_i,z_i)+d_i(z_i,y_i)),$ for $i =1,\dots,n$, we have
{\small \begin{eqnarray*}
D(\g{x},\g{ y})&=&F(d_1(x_1, y_1), \dots, d_n(x_n, y_n))\\
&\hspace*{-3cm}\leqslant& \hspace*{-1.7cm}F(K(d_1(x_1, z_1)+d_1(z_1, y_1)), \dots , K(d_n(x_n, z_n)+d_n(z_n, y_n)))\\
&\hspace*{-3cm}\leqslant& \hspace*{-1.7cm} \left(\frac{s^K-s}{s-1}+s^{K-1}\right) F(d_1(x_1, z_1)+d_1(z_1, y_1), \dots , d_n(x_n, z_n)+d_n(z_n, y_n))\\
&\hspace*{-3cm}\leqslant& \hspace*{-1.7cm} s\cdot \left(\frac{s^K-s}{s-1}+s^{K-1}\right) \left[F(d_1(x_1, z_1), \dots , d_n(x_n, z_n))+F(d_1(z_1, y_1), \dots ,d_n(z_n, y_n))\right]
\\&\hspace*{-3cm}\leqslant& \hspace*{-1.7cm} s'(D(\g{x},\g{z})+D(\g{z}, \g{y}))
\end{eqnarray*}}
Thus, $D$ is a $b$-metric with a constant $s'$.
\end{proof}

\begin{proposition}\label{propos[a,b]}
Let $n \in \N$ and $F\colon\R_+^n\to \R_+$ be an amenable function. If there exist constants $a, c \in \mathbb{R}$ with $0<a\leqslant c$ such that
$$\forall_{ \g{x}\in\R_+^n} \, (\g{x}\neq (0, \dots, 0) \Longrightarrow F(\g{x})\in [a, c])$$
then $F$ is ($n$)-$b$-metric preserving with a constant $K = \max\{1, \frac{c}{2a}\}$.
\end{proposition}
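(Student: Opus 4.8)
The plan is to verify directly that the product function $D$ from \eqref{functio} is a $b$-metric with the claimed relaxation constant $K = \max\{1, \tfrac{c}{2a}\}$, without passing through the triangle-triplet characterization of Theorem \ref{theorem:main}. This has the advantage of producing the explicit constant and of exhibiting a \emph{single} $K$ that works simultaneously for every choice of the input constants $K_1,\dots,K_n$, which is exactly what membership in $P^n_B$ requires. So I would fix an arbitrary collection of $b$-metric spaces $(X_i,d_i)$, $i=1,\dots,n$, put $\g{X}:=\prod_{i=1}^n X_i$, and define $D$ by \eqref{functio}.

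First I would dispatch the two easy axioms exactly as in the proof of Proposition \ref{propo:3.1}: condition (S1) for $D$ follows from amenability of $F$ together with (S1) for each $d_i$, since $D(\g{x},\g{y})=0 \iff F(d_1(x_1,y_1),\dots,d_n(x_n,y_n))=0 \iff d_i(x_i,y_i)=0$ for all $i \iff \g{x}=\g{y}$; and (S2) for $D$ is immediate from the symmetry of the $d_i$. These steps are routine and I would state them briefly.

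The heart of the argument is the $b$-metric inequality, where I would split into cases according to which of the three points coincide. Fix $\g{x},\g{y},\g{z}\in\g{X}$. If $\g{x}=\g{z}$ then $D(\g{x},\g{z})=0$ and there is nothing to prove. If one of the adjacent pairs coincides, say $\g{x}=\g{y}$ (the case $\g{y}=\g{z}$ being symmetric), then $D(\g{x},\g{y})=0$ while $D(\g{x},\g{z})=D(\g{y},\g{z})$, so the desired inequality reduces to $D(\g{y},\g{z})\leqslant K\,D(\g{y},\g{z})$, which holds because $K\geqslant 1$. In the remaining, essential case $\g{x}\neq\g{y}$ and $\g{y}\neq\g{z}$, amenability forces the relevant argument tuples of $F$ to be nonzero, so the hypothesis gives $D(\g{x},\g{y})\geqslant a$ and $D(\g{y},\g{z})\geqslant a$, whence $D(\g{x},\g{y})+D(\g{y},\g{z})\geqslant 2a$; on the other hand $D(\g{x},\g{z})\leqslant c$, since its argument tuple is nonzero (as $\g{x}\neq\g{z}$). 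Combining these, $D(\g{x},\g{z})\leqslant c = \tfrac{c}{2a}\cdot 2a \leqslant \tfrac{c}{2a}\bigl(D(\g{x},\g{y})+D(\g{y},\g{z})\bigr)\leqslant K\bigl(D(\g{x},\g{y})+D(\g{y},\g{z})\bigr)$.

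The only real pitfall — and the step that dictates the precise value of the constant — is resisting the temptation to bound the sum $D(\g{x},\g{y})+D(\g{y},\g{z})$ below by a single $a$, which would only yield the weaker constant $c/a$. The factor $2$ is recovered precisely by separating off the degenerate configurations, where one adjacent pair coincides and the estimate is instead rescued by $K\geqslant 1$, and then observing that in the genuinely non-degenerate case \emph{both} boundary distances are at least $a$. Once the case split is arranged in this way, the constant $\max\{1,\tfrac{c}{2a}\}$ drops out, and I would conclude that $D$ satisfies $(B_K)$; since $K$ does not depend on $K_1,\dots,K_n$, this shows $F\in P^n_B$.
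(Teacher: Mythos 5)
Your proposal is correct and follows essentially the same route as the paper: verify (S1) and (S2) as in Proposition \ref{propo:3.1}, then obtain the $b$-metric inequality directly from the bounds $D(\g{x},\g{z})\leqslant c$ and $D(\g{x},\g{y}),D(\g{y},\g{z})\geqslant a$ for distinct points, with the degenerate configurations absorbed by $K\geqslant 1$. The only difference is cosmetic: the paper compresses your case analysis into a single ``without loss of generality, assume $\g{x},\g{y},\g{z}$ are distinct,'' whereas you spell out why the coincidence cases are harmless.
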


\begin{proof}
Let $(X_1, d_1), \dots , (X_n, d_n)$ be a collection of $b$-metric spaces. Let $\g{X} := X_1 \times \dots \times X_n$. 
Consider a function $D\colon \g{X} \times \g{X} \to \R_+$ given by 
$$D(\g{x}, \g{y})=F(d_1(x_1, y_1), \dots, d_n(x_n, y_n))$$ 
for each $\g{x}, \g{y} \in \g{X}$. 
In similar fashion to the proof of Proposition \ref{propo:3.1} we can verify that $D$ fulfills conditions (S1), (S2). Let $\g{x}, \g{y}, \g{z} \in \g{X}$ and $K=\max\{1, \frac{c}{2a}\}$. Without loss of generality, we can assume that $\g{x}, \g{y}, \g{z}$ are distinct. Then
$$a\leqslant F(d_1(x_1, z_1), ... , d_n(x_n, z_n))$$
as well as
$$a\leqslant F(d_1(z_1, y_1), ... , d_n(z_n, y_n)).$$
In particular, this implies that
\begin{eqnarray*}
D(\g{x}, \g{y})&=&F(d_1(x_1, y_1), ... , d_n(x_n, y_n)) \leqslant c \leqslant K\cdot 2a \\
&\leqslant& K(F(d_1(x_1, z_1), ... , d_n(x_n, z_n))+F(d_1(z_1, y_1), ... ,d_n(z_n, y_n)))\\
&=&K(D(\g{x}, \g{z})+D(\g{z}, \g{y})),
\end{eqnarray*}
which concludes the proof.
\end{proof}
\begin{corollary} \label{PBM}
Let $n \in\N$ and $F:\R_+^n \to \R_+$. If $F$ is amenable and there exists $c>0$ such that $F(\textbf{a})\in [c,2c]$ for all $\textbf{a} \in\R_+^n \setminus \{\g{0}\}$, then $F\in P_{BM}^n$. 
\end{corollary}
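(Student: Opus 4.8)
The plan is to derive this statement as an immediate special case of Proposition \ref{propos[a,b]}. First I would match up the hypotheses: the Corollary assumes $F$ is amenable and that $F(\g{x}) \in [c,2c]$ for every $\g{x} \in \R_+^n \setminus \{\g{0}\}$, which is exactly the boxedness assumption of the Proposition once we identify the Proposition's lower bound with $c$ and its upper bound with $2c$. To avoid a clash with the letter $c$ already appearing in the Proposition, I would apply that Proposition with its parameters renamed as $a := c$ and (the Proposition's upper bound) $c' := 2c$. Since $0 < c \leqslant 2c$, the requirement $0 < a \leqslant c'$ is satisfied, and the implication $\g{x} \neq (0,\dots,0) \Rightarrow F(\g{x}) \in [a,c']$ holds verbatim by assumption. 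Thus all hypotheses of Proposition \ref{propos[a,b]} are in force.

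The key step is then the computation of the relaxation constant that the Proposition furnishes. With $a = c$ and $c' = 2c$ the Proposition yields that $F$ is $(n)$-$b$-metric preserving with constant
\[
K = \max\Bigl\{1, \tfrac{c'}{2a}\Bigr\} = \max\Bigl\{1, \tfrac{2c}{2c}\Bigr\} = 1.
\]
In other words, for any collection of $b$-metric spaces $(X_i,d_i)$, the product distance $D$ from \eqref{functio} satisfies the $b$-metric inequality $(B_1)$ on $\g{X}$.

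Finally, I would invoke the elementary observation that the $b$-metric inequality $(B_K)$ with $K=1$ is nothing but the ordinary triangle inequality $(M)$: in the notation of Example \ref{example:3}, the generating function $g(a,b) = K(a+b)$ collapses to $g(a,b) = a+b$ when $K=1$. Hence $D$ is a genuine metric on $\g{X}$ for every choice of input $b$-metric spaces, which is precisely the assertion $F \in P^n_{BM}$. I do not anticipate any substantive obstacle here; the entire content is the bookkeeping that distinguishes the two roles played by the symbol $c$ together with the recognition that doubling (upper bound equal to twice the lower bound) is exactly the threshold at which the Proposition's constant $\tfrac{c'}{2a}$ drops to $1$, forcing the preserved axiom to be the full triangle inequality rather than a merely relaxed one.
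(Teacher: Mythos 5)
Your proposal is correct and is exactly the intended derivation: the paper states the Corollary as an immediate specialization of Proposition \ref{propos[a,b]} with lower bound $c$ and upper bound $2c$, so that the resulting relaxation constant $\max\{1,\tfrac{2c}{2c}\}$ equals $1$ and the product distance satisfies the ordinary triangle inequality, i.e.\ $F\in P^n_{BM}$. Your bookkeeping of the clashing symbol $c$ and the observation that $(B_1)$ coincides with $(M)$ are both accurate.
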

In one-dimensional variant, the implication in the Corollary above can be reversed, as shown in \cite[Theorem 24]{Khemaratchakumthorn2018}. This is not the case when $n\geqslant 2$ is concerned. The proper counterexample is presented below.

\begin{example}
Let $F\colon \R^2_+\to \R_+$ be given by a formula
$$F(\g{a}) = \left\{ \begin{array}{ccc}  
0     &\mbox{if } \g{a}=(0,0)  \\
1     &\mbox{if } \g{a}=(1,0) \\
3    &\mbox{if } \g{a}=(0,1)\\
2     &\mbox{otherwise.} 
\end{array}\right. $$
We will show that $F \in P^2_{BM}$ despite not satisfying the condition from Corollary \ref{PBM}. Let $\g{a} = (a_1,a_2), \g{b} = (b_1,b_2), \g{c} = (c_1,c_2) \in \R^2_+ \setminus \{(0,0)\}$. Assume that $(F(\g{a}),F(\g{b}),F(\g{c})) \notin \Delta_M$. By Theorem \ref{theorem:main}, it suffices to show that for $i=1$ or $i=2$ we have that $(a_i,b_i,c_i) \notin \Delta_{B_K}$ for any $K \geqslant 1.$ Since $\g{a},\g{b},\g{c} \neq (0,0)$, we have that one of $\g{a},\g{b},\g{c}$ (say $\g{c}$) must be equal to $(0,1)$ and $\g{a}=\g{b}=(1,0).$ Then $F(\g{c})=3>1+1=F(\g{a})+F(\g{b})$. However $c_2> K(a_2+b_2) = 0$ for any $K \geqslant 1.$ Therefore, $(\g{a},\g{b},\g{c}) \notin \Delta_{B_K}$. Finally, $F \in P^2_{BM}.$
\end{example}
\begin{example} \label{kw}
Let $n \in \N, F\colon \R^n_+ \to \R_+$ be the sum of squares of its arguments, i.e. $$F(a_1,\dots,a_n)=\sum_{i=1}^n a_i^2.$$ 
$F$ is clearly monotone and amenable. We will show that it is quasi-subadditive.
Let $a_1,\dots,a_n,b_1,\dots,b_n \in \R_+$. Then
\begin{eqnarray*}
F(a_1+b_1,\dots,a_n+b_n)&=&\sum_{i=1}^n\left(a_i+b_i\right)^2 = \sum_{i=1}^n \left(a_i^2+2a_ib_i+b_i^2\right)\\ &\leqslant& \sum_{i=1}^n \left(2a_i^2+2b_i^2\right)=\sum_{i=1}^n 2a_i^2+\sum_{i=1}^n 2b_i^2\\&=& 2F(a_1,\dots,a_n)+2F(b_1,\dots,b_n).
\end{eqnarray*}
Hence $F$ is quasi-subbadditive and, by Proposition \ref{propo:3.1}, it is $(n)$-$b$-metric preserving.
Observe that $F$ is not $(n)$-metric preserving. Indeed, consider the triple $(1,2,3)$. Of course, $(1,2,3) \in \triangle_M$. However, $$F(3,\dots,3) = 9n > 5n = n + 4n = F(1,\dots,1)+F(2,\dots,2).$$ So, $(F(1,\dots,1),F(2,\dots,2),F(3,\dots,3)) \notin \triangle_M.$ By Theorem \ref{theorem:main}, $F$ is not $(n)$-metric preserving.
\end{example}

The above example proves that conditions from Proposition \ref{propos[a,b]} are not necessary. On the other hand, using Proposition \ref{propos[a,b]} one can easily construct an example of an ($n$)-$b$-metric preserving function indicating that the condition of monotonicity from Proposition \ref{propo:3.1} is also not necessary.

\begin{example} \label{not_monotone}
Let $n \in \N, F\colon \R^n_+ \to \R_+$ be given by the formula 
\begin{equation*}
    F(x_1, ... , x_n) = \begin{cases}
               0      & \mbox{if  } x_1 = ... = x_n = 0\\
               2               & \mbox{if  } x_1 = ... = x_n = 1\\
               1 & \text{otherwise}
           \end{cases}
\end{equation*} 
In the light of Proposition \ref{propos[a,b]}, F is ($n$)-$b$-metric preserving. Clearly, it is not monotone.
\end{example}

As the Reader can see, the monotonicity is not necessary for a function to be $(n)$-$b$-metric preserving. Theorem 3.1 states that amenability cannot be omitted and the following results show that neither can be quasi-subadditivity.

\begin{lemma}\label{lemma:jeden}
Let $n \in\N$ and $F\colon\R_+^n\to \R_+$, $F\in P_{MB}^n$. Then $F$ is quasi-subadditive.
\end{lemma}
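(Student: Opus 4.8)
The plan is to reduce quasi-subadditivity to a single application of the triangle-triplet characterization. First I would secure a \emph{uniform} relaxation constant for the output $b$-metric: since every metric is in particular a $b$-metric with constant $1$, the step-wise gluing construction in the proof of Lemma~\ref{mainlemma} (run with all $K_j = 1$, so that every space produced is an honest metric space) together with Theorem~\ref{theorem:main} yields, exactly as recorded in the remark following Lemma~\ref{mainlemma}, a single constant $K \geqslant 1$ such that for every $(M,\dots,M)$-triangle triplet $(\g{a},\g{b},\g{c}) \in \triangle_{(M,\dots,M)}$ the image $(F(\g{a}),F(\g{b}),F(\g{c}))$ lies in $\triangle_{B_K}$. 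The crucial point of invoking this is that $K$ is independent of the particular triplet, which is precisely what lets us extract one quasi-subadditivity constant $s := K$ valid simultaneously for all arguments.

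Next I would exhibit, for arbitrary $\g{a},\g{b} \in \R_+^n$, a convenient $(M,\dots,M)$-triangle triplet manufactured from $\g{a}$ and $\g{b}$, namely $(\g{a+b},\g{a},\g{b})$. I claim that in each coordinate $i$ the triple $(a_i+b_i,\,a_i,\,b_i)$ is an $M$-triangle triplet in the sense of Definition~\ref{onedimtt}. When $a_i,b_i>0$ all three entries are positive and the three metric inequalities $a_i+b_i \leqslant a_i+b_i$, $a_i \leqslant (a_i+b_i)+b_i$ and $b_i \leqslant (a_i+b_i)+a_i$ hold trivially. The degenerate situations fall under the second clause of Definition~\ref{onedimtt}: if $a_i=0$ the triple is a permutation of $(0,b_i,b_i)$, if $b_i=0$ it is a permutation of $(0,a_i,a_i)$, and if $a_i=b_i=0$ it equals $(0,0,0)$; each is of the admissible form $(0,l,l)$. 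Hence $(\g{a+b},\g{a},\g{b}) \in \triangle_{(M,\dots,M)}$.

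Finally I would combine the two steps. Applying the characterization to this triplet gives $(F(\g{a+b}),F(\g{a}),F(\g{b})) \in \triangle_{B_K}$, and the defining inequalities of a $B_K$-triangle triplet include $F(\g{a+b}) \leqslant K\bigl(F(\g{a})+F(\g{b})\bigr)$; this is immediate in the all-positive case, and it persists in the degenerate case $(0,l,l)$, where, by amenability of $F$ (guaranteed by Theorem~\ref{theorem:main}), a vanishing value of $F(\g{a})$ or $F(\g{b})$ forces the corresponding argument to be $\g{0}$ and the inequality to collapse to a triviality using $K \geqslant 1$. Since $\g{a},\g{b}$ were arbitrary, this establishes quasi-subadditivity with $s=K$. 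I expect the only genuine subtleties to be the bookkeeping of the degenerate triangle triplets and the honest justification of the uniform constant $K$; once that uniform $K$ is in hand the argument is essentially a one-line consequence, so the real weight of the proof lies in the characterization imported from Lemma~\ref{mainlemma}.
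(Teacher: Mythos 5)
Your argument is correct and follows essentially the same route as the paper: the paper likewise extracts a uniform constant $K$ from Lemma~\ref{mainlemma} (via Remark~\ref{uwaga}) so that $F$ is $(M,\dots,M)$--$B_K$ preserving, and then applies the triangle-triplet characterization of Theorem~\ref{theorem:main} to the triplet $(a_i,b_i,a_i+b_i)\in\triangle_M$. Your treatment of the degenerate $(0,l,l)$ cases is more explicit than the paper's ``it is easy to see,'' but the substance is identical.
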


\begin{proof}
By Remark \ref{uwaga}, there exists a constant $K \geqslant 1$ such that $F$ is $(M,\dots,M)-B_K$ preserving. 

Let $\g{a} = (a_1,\dots,a_n)$, $\g{b} = (b_1,\dots,b_n)\in\R_+^n$. It is easy to see that $(a_i, b_i, a_i+b_i) \in \triangle_M$ for $i \in \{1,\dots,n\}.$ Therefore, by Theorem \ref{theorem:main}, \[F(\g{a+b})\leqslant K(F(\g{a})+F(\g{b}))\] for every $\g{a},\g{b}\in\R_+^n$.
Hence $F$ is quasi-subadditive with a constant $K$.
\end{proof}

Despite being useful on its own, we can use this Lemma to prove the following

\begin{theorem} \label{pmb}
Let $n \in\N$ and $F:\R_+^n \to \R_+$. Then $F\in P_B^n$ if and only if $F\in P_{MB}^n$.
\end{theorem}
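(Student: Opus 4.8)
The plan is to prove the two inclusions separately, the forward one being immediate and the reverse one requiring a geometric realization of relaxed triangle triplets inside genuine metric spaces.

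For $P_B^n \subseteq P_{MB}^n$ I would simply observe that every metric space is a $b$-metric space with relaxation constant $1$; hence, specializing the definition of $P_B^n$ to $K_1=\dots=K_n=1$ (cf.\ Remark \ref{uwaga}), any $F\in P_B^n$ sends an arbitrary collection of metrics to a $b$-metric, i.e.\ $F\in P_{MB}^n$. For the converse, fix $F\in P_{MB}^n$. By the characterization of $P^n_{MB}$ recorded in the remark following Lemma \ref{mainlemma}, $F$ is amenable and there is a \emph{single} constant $K_0\ge 1$ such that $F$ maps every $(M,\dots,M)$-triangle triplet to a $B_{K_0}$-triangle triplet. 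In view of Theorem \ref{theorem:main} combined with Remark \ref{uwaga}, to obtain $F\in P_B^n$ it suffices to show that for every $\g{K}=(K_1,\dots,K_n)$ there is $K'\ge 1$ with the property that $F$ maps every $(B_{K_1},\dots,B_{K_n})$-triangle triplet to a $B_{K'}$-triangle triplet. Since, by Remark \ref{useful:remark}, for $K=\max_i K_i$ every $(B_{K_1},\dots,B_{K_n})$-triangle triplet is also a $(B_K,\dots,B_K)$-triangle triplet, it is enough to treat a common constant $K$. The crux here, and the reason one cannot simply invoke Proposition \ref{propo:3.1}, is that members of $P^n_{MB}$ need not be monotone (Example \ref{not_monotone}), so $b$-metric inequalities among the inputs cannot be transported through $F$ directly; the construction below circumvents this by manufacturing genuine metric triplets whose $F$-images are controlled immediately by the hypothesis.

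So let $(\g{a},\g{b},\g{c})$ be a $(B_K,\dots,B_K)$-triangle triplet and fix an integer $m\ge\max\{K,2\}$. For each coordinate $i$ I would equip the vertex set of a cycle $Q_0,Q_1,\dots,Q_{2m}$ (with $Q_{2m}$ adjacent to $Q_0$) with the shortest-path metric $\rho_i$ induced by the edge weights $Q_{2j}Q_{2j+1}\mapsto a_i$, $Q_{2j+1}Q_{2j+2}\mapsto b_i$ and $Q_{2m}Q_0\mapsto c_i$. Because the total weight of the cycle minus any single edge dominates that edge -- this is exactly where $m\ge 2$ and the relaxed inequalities $c_i\le K(a_i+b_i)\le m(a_i+b_i)$ together with their cyclic counterparts are used -- each edge is a shortest path, so adjacent vertices lie at distance equal to the prescribed weight and, in particular, $\rho_i(Q_0,Q_{2m})=c_i$. (When some of $a_i,b_i,c_i$ vanish the construction degenerates to a pseudometric, and one glues the points at distance $0$ exactly as in the proof of Lemma \ref{mainlemma}.) Writing $\g{s}_j:=(\rho_1(Q_0,Q_j),\dots,\rho_n(Q_0,Q_j))$ and $\g{e}_j:=(\rho_1(Q_j,Q_{j+1}),\dots,\rho_n(Q_j,Q_{j+1}))$, the triple $(\g{s}_j,\g{s}_{j+1},\g{e}_j)$ is an $(M,\dots,M)$-triangle triplet, since in each coordinate it consists of the three pairwise distances of $Q_0,Q_j,Q_{j+1}$ in the metric space $(X_i,\rho_i)$.

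Applying the $P^n_{MB}$ condition to each such metric triplet yields $F(\g{s}_{j+1})\le K_0\bigl(F(\g{s}_j)+F(\g{e}_j)\bigr)$, and since $\g{s}_0=\g{0}$ with $F(\g{s}_0)=0$, an iteration (using $K_0\ge 1$) gives
\[
F(\g{c})=F(\g{s}_{2m})\le K_0^{2m}\sum_{j=0}^{2m-1}F(\g{e}_j)= m\,K_0^{2m}\bigl(F(\g{a})+F(\g{b})\bigr),
\]
because exactly $m$ of the edge-vectors $\g{e}_j$ equal $\g{a}$ and $m$ equal $\g{b}$. Repeating the construction with $\g{a}$ (respectively $\g{b}$) as the distinguished edge -- which is legitimate since the triplet inequalities are symmetric and $a_i\le K(b_i+c_i)$, $b_i\le K(a_i+c_i)$ hold -- produces the remaining two cyclic estimates with the same constant. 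Hence $(F(\g{a}),F(\g{b}),F(\g{c}))$ is a $B_{K'}$-triangle triplet with $K':=m\,K_0^{2m}$, the non-degenerate (or admissible $(0,l,l)$) status being guaranteed by amenability, which forces $F(\g{x})=0\iff\g{x}=\g{0}$. As $K'$ depends only on $K$ and $K_0$, the required reduction is complete and $F\in P_B^n$. The main obstacle throughout is the absence of monotonicity, resolved precisely by the cycle (shortest-path) realization that keeps the inputs exactly equal to $\g a,\g b,\g c$ while turning the relaxed triplet into a chain of honest metric triplets.
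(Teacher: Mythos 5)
Your proof is correct, and while it shares the same outer skeleton as the paper's (the forward inclusion is trivial; the converse is reduced, via Theorem \ref{theorem:main}, Lemma \ref{mainlemma} and Remark \ref{uwaga}, to showing that $F$ carries relaxed triangle triplets to $B_{K'}$-triangle triplets for a uniform $K'$), the engine of the reduction is genuinely different. The paper first extracts quasi-subadditivity from the hypothesis (Lemma \ref{lemma:jeden}, obtained from the metric triplets $(a_i,b_i,a_i+b_i)$), then handles a relaxed triplet in a single stroke: from $a_i\leqslant N(b_i+c_i)$ it forms the degenerate metric triplet $(a_i,\,N(b_i+c_i),\,N(b_i+c_i))$, applies the $(M,\dots,M)$--$B_K$ property once to get $F(\g{a})\leqslant 2K\,F(N(\g{b}+\g{c}))$, and then strips off the factor $N$ and splits $\g{b}+\g{c}$ by iterating quasi-subadditivity. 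You instead realize the relaxed triplet geometrically: the weighted $(2m+1)$-cycle with the shortest-path metric turns the long side into the endpoint of a chain of $2m$ honest metric triangles anchored at $Q_0$, and iterating the $B_{K_0}$ bound along the chain gives $F(\g{c})\leqslant mK_0^{2m}\bigl(F(\g{a})+F(\g{b})\bigr)$ with no appeal to quasi-subadditivity or to estimates of $F(N\g{x})$. Your route is somewhat longer to set up (one must verify that every edge is a geodesic, that $\rho_i(Q_0,Q_{2m})=c_i$, and handle the zero-weight degeneracies by gluing), but it works entirely at the level of triplets, so it directly produces the statement of Corollary \ref{waznecorollary}, and the cycle realization is a reusable device; the paper's route is shorter once Lemma \ref{lemma:jeden} is available and yields a more explicit constant. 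Two small points to tidy: in your telescoping the coefficient of $F(\g{e}_j)$ is $K_0^{2m-j}$ rather than $K_0^{2m}$ (the stated bound still follows since $K_0\geqslant 1$), and the admissibility of the output triplet in the degenerate case should be spelled out by noting that $\g{a}=\g{0}$ forces $\g{b}=\g{c}$ coordinatewise, whence $F(\g{b})=F(\g{c})$.
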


\begin{proof}
The necessity is obvious. Now, let $F\in P_{MB}^n$ and consider a collection of $b$-metric spaces $(X_1, d_1), \dots , (X_n, d_n)$ with relaxation constants $K_1, \dots , K_n\geqslant 1$ respectively.  Let $\g{X} := X_1 \times \dots \times X_n$. Consider a function $D\colon \g{X} \times \g{X} \to \R_+$ given by 
$$D(\g{x}, \g{y})=F(d_1(x_1, y_1), \dots, d_n(x_n, y_n))$$
for each $\g{x} = (x_1,\dots,x_n), \g{y}=(y_1,\dots,y_n) \in \g{X}$.
Condition (S1) holds for the function $D$ since $F$ is amenable. (S2) is also obvious.

Using Lemma \ref{lemma:jeden} we obtain the existence of $s\geqslant 1$ such that 
\begin{equation}\label{eqn:2.1} 
F(\g{a+b})\leqslant s(F(\g{a})+F(\g{b})) \mbox{ for all } \g{a}, \g{b}\in \R_+^n.
\end{equation}
Since $d_1, \dots , d_n$ are $b$-metrics, there exists $N\in \N$, $N\geqslant \max\{K_1, \dots, K_n\}$ such that for any $i \in \{1,\dots,n\}$ and for all $x_i,y_i,z_i \in X_i$ 
\begin{equation}\label{eqn:2.2}
d_i(x_i, y_i)\leqslant
N\left(d_i(x_i, z_i)+d_i(z_i, y_i) \right).
\end{equation}
Using Theorem \ref{theorem:main}, Lemma \ref{mainlemma} and the fact that $F\in P_{MB}^n$, we are able to find $K\geqslant 1$ such that for any $\g{a} =(a_1,\dots,a_n)$, $\g{b}=(b_1,\dots, b_n)$, $\g{c}=(c_1,\dots,c_n) \in \R_+ ^n$ satisfying $(a_i,b_i,c_i) \in \triangle_M$ for $i \in \{1,\dots,n\}$ we have $(F(\g{a}), F(\g{b}), F(\g{c})) \in \triangle_{B_K}$.

Let $M:=2K\left(\sum\limits_{i=2}^{N} s^i+s^{N}\right)$. Consider $\g{x} = (x_1,\dots,x_n)$, $\g{y}=(y_1,\dots, y_n)$, $\g{z}=(z_1,\dots,z_n)\in \g{X}$ and define $\g{a}, \g{b}, \g{c}$ as follows: \[
\g{a}:= (d_1(x_1, y_1), \dots , d_n(x_n,y_n )),\]
\[\g{b}:= (d_1(x_1, z_1), \dots , d_n(x_n, z_n)),\]
\[
\g{c}:= (d_1(y_1, z_1), \dots , d_n(y_n, z_n)).
\]
Then, by  \eqref{eqn:2.2}, we get
\begin{equation}\label{eqn:2.3}
a_i \leqslant Nb_i+Nc_i
\end{equation}
for all $i \in \{1,\dots,n\}.$
 Therefore, $(a_i,Nb_i+Nc_i,Nb_i+Nc_i) \in \triangle_M$
 for $i \in \{1,\dots,n\},$ and hence 
 $(F(\g{a}), F(N\g{b}\g{+}N\g{c}), F(N\g{b}\g{+}N\g{c}))\in \triangle_{B_K}$. Thus,
\begin{eqnarray*}\label{eqn:2.4}
D(\g{x},\g{ y})=F(\g{a})&\leqslant& K(F(N\g{b}\g{+}N\g{c})+F(N\g{b}\g{+}N\g{c}))
\\&=&2K\cdot F(N(\g{b}\g{+}\g{c})).
\end{eqnarray*}
Analogously to the reasoning in the proof of Proposition \ref{propo:3.1} we can show that for all $m\in \N$
\begin{equation*}
F(m\cdot \g{x})\leqslant \left(\sum\limits_{i=1}^{m-1} s^i+s^{m-1}\right)F(\g{x})\mbox{ for all }\g{x}\in \R_+^n.
\end{equation*}
Combining \eqref{eqn:2.1} with the two inequalities above we obtain
\begin{eqnarray*}
D(\g{x},\g{ y})&\leqslant& 2K\cdot F(N(\g{b+}\g{c}))\leqslant 2K\left(\sum\limits_{i=1}^{N-1} s^i+s^{N-1}\right)F(\g{b+}\g{c})\\
&\leqslant& 2K\left(\sum\limits_{i=2}^{N} s^i+s^{N}\right)(F(\g{b})+F(\g{c}))\\
&=&M(D(\g{x},\g{z})+D(\g{z},\g{y})),
\end{eqnarray*}
which proves that $D$ is a $b$-metric on $\g{X}$ with a constant $M$.
\end{proof}

\begin{example}
Consider $F\colon \R^2_+ \to \R_+$ given by the formula \[
F(a,b)=e^{a+b} \mbox{ for all } (a,b)\in\R^2_+.
\]
We will show that $F$ is not quasi-subadditive.
Observe that
\begin{eqnarray*}
\lim\limits_{a_1\to \infty}\lim\limits_{a_2\to \infty} \frac{F(a_1+a_2,0)}{F(a_1,0)+F(a_2,0)}&=& \lim\limits_{a_1\to \infty}\lim\limits_{a_2\to \infty} \frac{e^{a_1+a_2}}{e^{a_1}+e^{a_2}}\\ &=& \lim\limits_{a_1\to \infty}e^{a_1} = \infty.
\end{eqnarray*}
Hence for any $s \geqslant 1$ there exist $a_1,a_2 \in \R_+$ such that $$F(a_1+a_2,0) > s(F(a_1,0)+F(a_2,0)).$$ Consequently, $F$ is not quasi-subadditive and, by Lemma \ref{lemma:jeden}, it is not $(2)$-metric-$b$-metric preserving. By Theorem \ref{pmb}, $F$ is not $(2)$-$b$-metric preserving.
\end{example}


%

\begin{proposition}
Let $n \in \N$. Then $P^n_{BM}\subsetneq P^n_M \subsetneq P^n_{MB} = P^n_B.$
\end{proposition}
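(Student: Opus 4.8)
The plan is to deduce the whole chain from results already in hand, treating the equality and the two inclusions as consequences of the monotonicity principle (Lemma \ref{zal}) combined with the elementary fact that every metric is a $b$-metric with relaxation constant $1$, and then to witness the two proper inclusions by the functions already analysed in Examples \ref{srednia aryt} and \ref{kw}. The equality $P^n_{MB}=P^n_B$ is precisely Theorem \ref{pmb}, so that end of the chain requires no further work.

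For the inclusion $P^n_{BM}\subseteq P^n_M$ I would argue that a metric is a $b$-metric with relaxation constant $K=1$, so a collection of $n$ metrics is in particular a collection of $n$ $b$-metrics; hence a function turning every collection of $b$-metrics into a metric a fortiori turns every collection of metrics into a metric. Formally this is Lemma \ref{zal}(i) with $g_i(a,b)=K(a+b)\geqslant a+b=h_i(a,b)$, equivalently the observation $P^n_{BM}=\bigcap_{K_1,\dots,K_n\geqslant 1}P_{(B_{K_1},\dots,B_{K_n})-M}\subseteq P_{(B_1,\dots,B_1)-M}=P^n_M$. Dually, for $P^n_M\subseteq P^n_{MB}$ I would note that a metric on $\g{X}$ is in particular a $b$-metric on $\g{X}$, which is Lemma \ref{zal}(ii) applied on the output side with $g(a,b)=a+b\leqslant K(a+b)=h(a,b)$, i.e. $M\Rightarrow B_K$.

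It then remains to show both inclusions are strict. For $P^n_{BM}\subsetneq P^n_M$ I would take $F$ to be the arithmetic mean $F(a_1,\dots,a_n)=\frac1n\sum_{i=1}^n a_i$, which lies in $P^n_M$ by Example \ref{srednia aryt} (the case $K=1$). To see $F\notin P^n_{BM}$, I would use the triplet equal to $(1,1,4)$ in every coordinate: since $(1,1,4)\in\triangle_{B_4}$ (indeed $4\leqslant 4(1+1)$, and the other two inequalities are immediate), the vectors $\g{a}=\g{b}=(1,\dots,1)$ and $\g{c}=(4,\dots,4)$ form a $(B_4,\dots,B_4)$-triangle triplet, yet $F(\g{c})=4>2=F(\g{a})+F(\g{b})$, so $(F(\g{a}),F(\g{b}),F(\g{c}))\notin\triangle_M$ and Theorem \ref{theorem:main} gives $F\notin P^n_{BM}$. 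For $P^n_M\subsetneq P^n_{MB}$ I would take $F$ to be the sum of squares $F(a_1,\dots,a_n)=\sum_{i=1}^n a_i^2$: Example \ref{kw} shows it is quasi-subadditive and hence $(n)$-$b$-metric preserving, so $F\in P^n_B=P^n_{MB}$, while the same example (using $(1,2,3)\in\triangle_M$) shows $F\notin P^n_M$.

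No step is a genuine obstacle, as the statement is a synthesis of facts already available in the excerpt; the only point demanding any care is the verification, through Theorem \ref{theorem:main}, that the two witnesses really fail, respectively satisfy, the triangle-triplet condition, which is why I would keep the explicit triplets $(1,1,4)$ and $(1,2,3)$ in plain view throughout the argument.
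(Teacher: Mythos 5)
Your proposal is correct and follows essentially the same route as the paper: the inclusions via Lemma \ref{zal} and Remark \ref{uwaga}, the equality via Theorem \ref{pmb}, and the two proper inclusions witnessed by the arithmetic mean (Example \ref{srednia aryt}) and the sum of squares (Example \ref{kw}). The only cosmetic difference is your choice of the $(B_4,\dots,B_4)$-triplet built from $(1,1,4)$ where the paper uses the $(B_2,\dots,B_2)$-triplet built from $(1,2,6)$; both verifications are valid.
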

\begin{proof}
The inclusions $P^n_{BM}\subset P^n_M$ and  $P^n_M \subset P^n_{MB}$ follows from Lemma \ref{zal} and Remark \ref{uwaga}. 

The equality $P^n_{MB} = P^n_B$ is precisely the statement of Theorem \ref{pmb}. 

The arithmetic mean $F$ from Example \ref{srednia aryt} is $(n)$-metric preserving but it does not convert the products of $b$-metrics to metrics. Indeed, taking the  $(B_2,\dots,B_2)$-triplet $(\g{x},\g{y},\g{z})$ consisting of \begin{align*} \g{x}&:=(1,\dots, 1),& \g{y}&:=(2,\dots,2),& \g{z}&:=(6,\dots,6),
\end{align*} we obtain that $F(\g{x})=1,$ $F(\g{y})=2$ and $F(\g{z})=6.$ Since $6>2+1$, these values do not form an $M$-triplet. Therefore from Theorem \ref{theorem:main} it follows that $F\notin P^n_{BM}$.
Hence the inclusion $P^n_{BM}\subset P^n_M$ is proper as well. 

Lastly, the Example \ref{kw} shows that the inclusion $P^n_M \subset P^n_{MB}$ is also proper. 
\end{proof}

\section{Applications}

It is worth pointing out that functions which combine multiple distance functions on a product of metric-type spaces find their applications in multiple-criteria decision making (MCDM). In particular, we would like to discuss the application of property-preserving functions regarding the TOPSIS method (introduced by
Hwang and Yoon in 1981 \cite{Hwang1981} and subsequently developed afterwards, e.g. \cite{Hung2009,Jahanshahloo2006a,Jahanshahloo2006b,Opricovic2004}). TOPSIS is an acronym for the \textit{Technique for Order Preference
by Similarity to the Ideal Solution}. As the name suggests, the procedure ranks the alternatives according to two distances: the one from hypothetically ideal solution and the one from theoretically worst alternative. 

The TOPSIS algorithm starts with forming the decision matrix which is meant to represent the satisfaction coming from the choice of each alternative according to the given criterion. Then, the matrix is normalized according to some normalization procedure. During this process, the values are multiplied by the criteria weights (the process of obtaining these weights will not be discussed in this paper). Afterwards, the positive-ideal and negative-ideal solutions (often abbreviated by PIS and NIS) are constructed (usually by taking maximal and minimal possible values for each criterion). Then, the distances
of each available alternative to PIS and NIS are calculated with a proper distance measure, which stems from applying the product-wise property-preserving function (usually the metric-preserving variant, see e.g. \cite{Olson2004}).
At last, the alternatives are ranked based on the ratios of their distances from the negative-ideal solution to the sum of distances from the positive-ideal and the negative-ideal solutions. Of course, the higher the ratio, the better the alternative.

While some initial research has been done in this field, the effects of choice of function used to generate the metric for measuring distances from PIS and NIS remain unbeknownst to a large extent, especially when more general classes of semimetrics are discussed. While MCDM seems to be out of our area of expertise for a moment, we strongly underline that it remains an important field of applications for functions preserving certain classes of semimetric spaces discussed within the scope of this article.

\section{Conclusion}

Although the topic of generalizations of function preserving metric-type properties have not been investigated well in the multidimensional case, they bear strong similiraties to their one-dimensional equivalents. Investigation of those seems more important than their one-dimensional equivalents though, as their field of application seems much broader than initially thought by mathematicians, involving multiple practical aspects of computer studies such as database management, aforementioned MCDM and so forth.

\end{document}